\documentclass[
%%% one of
%submission
final
%%% if all authors have the same affiliation
% , nomarks
]{dmtcs-episciences}

% DON'T LOAD ANY STYLES THAT CHANGE THE PAGE LAYOUT
% AND DON'T CHANGE THE PAGE LAYOUT BY HAND, EITHER.

\usepackage[utf8]{inputenc}
\usepackage{subfigure}

% graphicx is now loaded automatically no need to put this in here anymore.
%
%\usepackage{graphicx}

% We strongly recommend to use natbib. Your colleagues deserve to be
% named in your text. PLEASE, ADAPT YOUR TEXT ACCORDINGLY, such that
% citations are grammatically correct.
\usepackage[round]{natbib}

\usepackage{tikz,color,url}
\usepackage{amsmath,amssymb,latexsym}
\usetikzlibrary{arrows}
%\pgfplotsset{compat=1.15}

\newtheorem{theorem}{Theorem}[section]

\newtheorem{lemma}[theorem]{Lemma}
\newtheorem{proposition}[theorem]{Proposition}

\newtheorem{p}{Problem}
\newtheorem{con}{Conjecture}
\newtheorem{exa}[theorem]{Example}

%% My definitions
% \newcommand{\sk}[1]{{\color{blue}#1}}

%\newcommand{\qed}{\hfill $\square$ \bigskip}

%\newcommand{\es}{{\rm es}_{\chi}}

\DeclareMathOperator{\TC}{TC}
\DeclareMathOperator{\DC}{DC}
\DeclareMathOperator{\DCG}{DCG}

\author[Bo\v{s}tjan Bre\v{s}ar et al.]{Bo\v{s}tjan Bre\v{s}ar\affiliationmark{1,2,}\thanks{Supported by the Slovenian Research and Innovation Agency (ARIS) under the grants P1-0297, N1-0285, and N1-0355.}
  \and Sandi Klav\v zar\affiliationmark{1,2,3,}\thanks{Supported by the Slovenian Research and Innovation Agency (ARIS) under the grants P1-0297, N1-0285, and N1-0355.}
  \and Babak Samadi\affiliationmark{2}
  % \thanks{Supported by the Slovenian Research Agency (ARIS) under the grants P1-0297, J1-2452, and N1-0285.}
}
\title[Total $k$-coalition]{Total $k$-coalition: bounds, exact values and an application to double coalition}
% put your affiliation here, not your full address.
% If you like to give away your email or other parts of your address,
% THIS IS NOT THE RIGHT PLACE, your address will change, this paper
% will not.
% Just watch that your personal data that you want to communicate on
% the episcience server is always up to date.
\affiliation{
  % one line per affiliation, no postal codes, grant numbers or similar
  Faculty of Natural Sciences and Mathematics, University of Maribor, Maribor, Slovenia\\
  Institute of Mathematics, Physics and Mechanics, Ljubljana, Slovenia\\
  Faculty of Mathematics and Physics, University of Ljubljana, Ljubljana, Slovenia}
\keywords{ total $k$-coalition; total $k$-domination; regular graph; double coalition}
% \received{1998-10-14}
% \revised{2002-07-19, 2014-02-05, 2015-09-09}
% \accepted{2015-09-09}
\begin{document}
% This is only used if you are compiling for a volume before vol 25
% \publicationdetails{VOL}{2015}{ISS}{NUM}{SUBM}
% This is the new form of collecting the data, starting with vol 25
\publicationdata{vol. 27:3}{2025}{3}{10.46298/dmtcs.15231}{2025-02-12; 2025-02-12; 2025-06-24}{2025-06-24}
%{1998-10-14; 1998-10-14; 2002-07-19; 2014-02-05; 2015-09-09; 2022-12-25}
%{2022-12-3}
\maketitle

\begin{abstract}
Let $G=\big{(}V(G),E(G)\big{)}$ be a graph with minimum degree $k$. A subset $S\subseteq V(G)$ is called a total $k$-dominating set if every vertex in $G$ has at least $k$ neighbors in $S$. Two disjoint sets $A,B\subset V(G)$ form a total $k$-coalition in $G$ if none of them is a total $k$-dominating set in $G$ but their union $A\cup B$ is a total $k$-dominating set. A vertex partition $\Omega=\{V_{1},\ldots,V_{|\Omega|}\}$ of $G$ is a total $k$-coalition partition if each set $V_{i}$ forms a total $k$-coalition with another set $V_{j}$. The total $k$-coalition number ${\rm TC}_{k}(G)$ of $G$ equals the maximum cardinality of a total $k$-coalition partition of $G$. In this paper, the above-mentioned concepts are investigated from combinatorial points of view. Several sharp lower and upper bounds on ${\rm TC}_{k}(G)$ are proved, where the main emphasis is given on the invariant when $k=2$. As a consequence, the exact values of ${\rm TC}_2(G)$ when $G$ is a cubic graph or a $4$-regular graph are obtained. By using similar methods, an open question posed by Henning and Mojdeh regarding double coalition is answered. Moreover, ${\rm TC}_3(G)$ is determined when $G$ is a cubic graph.
\end{abstract}

%%%%%%%%%%%%%%%%%%%%%%%%%%%%%%%%%%%%%%%%%%%%%%%%%%%%%%%%%%%%%%%%%%%%%%%%%%%%%%%%%%%%
 \nocite{*}
 
\section{Introduction} 
\label{sec:intro}

Coalition in graphs was introduced by~\cite{hhhmm}, and was studied in a number of subsequent papers~\cite{agk,bhp,hhhmm1, hhhmm2,hhhmm3, hhhmm4}. While the concept of coalition in graphs arises from (standard) domination in graphs, several authors studied variants of coalition that are related to other domination-type concepts. For instance, total coalition~\cite{abg,babl,hj}, independent coalition~\cite{abgk-2025, sm}, paired coalition~\cite{smn}, connected coalition~\cite{abgk} and $k$-coalition~\cite{JAB} correspond to total, independent, paired, connected and $k$-domination in graphs, respectively. In addition, transversal coalition in hypergraphs was introduced recently~\cite{hy} presenting a coalition version of transversals in hypergraphs. 

In this paper, we present a coalition counterpart to total $k$-domination~\cite{bhs,hk,kaz,Kulli}. The latter concept is also known under the names $k$-tuple total domination~\cite{hk,hy0} and total $k$-tuple domination~\cite{bbgms}. Total $k$-domination was studied from various perspectives, while the main focus was on the parameter when $k=2$. An interplay between strong transversals in hypergraphs and total $2$-domination served as a tool for obtaining sharp upper bounds on the total $2$-domination number in general graphs and in cubic graphs~\cite{hy0,HYbook}.

In Section~\ref{sec:prelim}, we establish notation and provide main definitions used in the paper. We also prove that a total $k$-coalition partition exists for all graphs with minimum degree at least $k$.
In Section~\ref{sec:bounds}, several sharp bounds on the total $k$-coalition number are proved where our emphasis is given on $k=2$. We prove that $\delta(G)-k+2\le \TC_{k}(G)\leq n(G)-k+1$ holds for any graph $G$ with minimum degree at least $k$, where both bounds are sharp, and we characterize the graphs attaining the upper bound.
One of our main results is the bound $\TC_{2}(G)\leq \lfloor\frac{\delta}{2}\rfloor(\Delta-2\lfloor\frac{\delta}{2}\rfloor+1)+\lceil\frac{\delta}{2}\rceil,$ which holds for all graphs $G$ with minimum degree $\delta\ge 2$ and maximum degree $\Delta\ge 4\lfloor\frac{\delta}{2}\rfloor-2$, and we construct a family of graphs attaining the bound for every even minimum degree $\delta\ge 2$.
In addition, this enables us to make use of some techniques that are more effective in relation to small values of $k$. In particular, as a consequence of this approach, we give the exact values of $\TC_{2}$ in the case of cubic graphs and $4$-regular graphs, and determine $\TC_{3}(G)$ for any cubic graph $G$.

\cite{HM} studied double coalition in graphs, which can be considered as a closed variant of total $2$-coalition. Indeed, if in the definition of total $2$-coalition one replaces open neighborhoods with closed neighborhoods, we get the definition of double coalition. The corresponding invariant of $G$ is the {\em double coalition number}, denoted $\DC(G)$. It was proved in~\cite{HM} that $\DC(G)=4$ for any cubic graph $G$. Based on this fact and some other pieces of evidence, Henning and Mojdeh asked if $\DC(G)\leq 1+\Delta(G)$ holds for any graph $G$ with $\delta(G)=3$. Some of the methods developed in Section~\ref{sec:bounds} can be applied to this question. In fact, we give a negative answer to it by presenting an infinite family of graphs $G$ of minimum degree 3 for which $\DC(G)$ is arbitrarily greater than $1+\Delta(G)$. In addition, we provide an upper bound for $\DC(G)$, which is expressed as a function of $\delta(G)$ and $\Delta(G)$, and is sharp for arbitrarily large $\delta(G)$ and $\Delta(G)$; the bound is proved for $\Delta(G)\ge  4\lceil\frac{\delta(G)}{2}\rceil-3$.

%%%%%%%%%%%%%%%%%%%%%%%%%%%%%%%%%%%%%%%%%%%%%%%%%%%%

\section{Preliminaries}
\label{sec:prelim}

Throughout this paper, we consider $G$ as a finite, connected and simple graph with vertex set $V(G)$ and edge set $E(G)$. We use~\cite{West} as a reference for terminology and notation which are not explicitly defined here. The ({\em open}) {\em neighborhood} of a vertex $v$ is denoted by $N_{G}(v)$, and its {\em closed neighborhood} is $N_{G}[v]=N_{G}(v)\cup \{v\}$. When $G$ will be clear from the context, we may simplify the notation to $N(v)$ and $N[v]$. The {\em minimum} and {\em maximum degrees} of $G$ are denoted by $\delta(G)$ and $\Delta(G)$, respectively.

A set of vertices $S\subseteq V(G)$ is a \textit{dominating set} (resp.\ \textit{total dominating set}) if every vertex in $V(G)\setminus S$ \big{(}resp. $V(G)$\big{)} has a neighbor is $S$.

The study of a natural generalization of total domination was initiated by~\cite{Kulli} as follows. For $k\ge 1$ and a graph $G$ of minimum degree at least $k$, a subset $S\subseteq V(G)$ is a \textit{total $k$-dominating set} if $|N(v)\cap S|\geq k$ for all $v\in V(G)$. This concept was later investigated by~\cite{hk} under the name \textit{$k$-tuple total domination}. A vertex partition of such a graph into total $k$-dominating sets is called the \textit{total $k$-domatic partition} of $G$. (Since $V(G)$ is a total $k$-dominating set of a graph $G$ with minimum degree at least $k$, such a partition exists in $G$.) The \textit{total $k$-domatic number} of $G$, denoted by $d_{\times k,t}(G)$, is the maximum cardinality taken over all total $k$-domatic partitions of $G$~\cite{SL}.

A \textit{total $k$-coalition} in a graph $G$ with $\delta(G)\geq k$ consists of two disjoint sets $U,V\subseteq V(G)$, such that neither $U$ nor $V$ is a total $k$-dominating set, but the union $U\cup V$ is a total $k$-dominating set in $G$. We say that $V$ is a {\em partner} of $U$ (and $U$ is a partner of $V$). A \textit{total $k$-coalition partition} in $G$ is a vertex partition $\Omega=\{V_{1},\ldots,V_{|\Omega|}\}$ such that every set $V_{i}$ forms a total $k$-coalition with another set $V_{j}$. The \textit{total $k$-coalition number} $\TC_{k}(G)$ equals the maximum cardinality taken over all total $k$-coalition partitions in $G$.\footnote{One of the referees informed us that the total $k$-coalition in graphs for $k=2$  was already studied under the name \textit{double total coalition} in~\cite{gab}. The special cases of the present Propositions~\ref{Existence} and~\ref{Large}, and of Lemma~\ref{L1} for $k=2$ can be found in~\cite{gab}.}

\cite{hh} initiated the study of tuple domination, which is conceptually close to total $k$-domination. In fact, for a graph $G$ with $\delta(G)\geq k-1$, a {\em $k$-tuple dominating set} is defined by making use of closed neighborhood instead of open neighborhood in the definition of a total $k$-dominating set. Note that a $2$-tuple dominating set is usually referred to as a \textit{double dominating set}. In view of this, similarly to the concept of total $2$-coalition, a \textit{double coalition partition} can be defined for all graphs with minimum degree at least $1$. \cite{HM} investigated double coalition in graphs. 

By an $\eta(G)$-partition, where $\eta\in\{\TC, \DC, d_{\times k,t}\}$, we mean an $\eta$-partition of $G$ of largest cardinality.

First, we show that total $k$-coalition partitions exist for all graphs of minimum degree at least $k$.

\begin{proposition}\label{Existence}
Any graph $G$ of minimum degree at least $k$ has a total $k$-coalition partition.
\end{proposition}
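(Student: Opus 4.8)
The plan is to exhibit one total $k$-coalition partition explicitly; since $\TC_k(G)$ is the maximum cardinality of such a partition, producing any single one establishes existence. I first record two elementary facts I will use. Because $\delta(G)\ge k$, the set $V(G)$ is itself a total $k$-dominating set, so total $k$-dominating sets exist at all. Moreover, for every vertex $w$ the singleton $\{w\}$ is \emph{not} a total $k$-dominating set: any vertex has at most one neighbor in $\{w\}$, and even when $k=1$ the vertex $w$ has no neighbor in $\{w\}$. Thus a singleton may always serve as one side of a coalition.

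Next I would fix a vertex $u$ of minimum degree and write $N(u)=\{v_1,\dots,v_\delta\}$, where $\delta=\delta(G)\ge k$. Consider the partition
$$\Omega=\bigl\{\{v_1\},\dots,\{v_{\delta-k+1}\},\,W\bigr\},\qquad W=V(G)\setminus\{v_1,\dots,v_{\delta-k+1}\}.$$
This is a genuine partition into $\delta-k+2\ge 2$ nonempty parts: the indices run since $\delta-k+1\ge 1$, the $v_i$ are distinct neighbors of $u$, and $u\in W$ because $u\notin N(u)$. The design is that the single large part $W$ is meant to act as a common partner for every singleton.

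The verification splits into three checks, the third being the heart of the matter. First, $W$ is not a total $k$-dominating set: the neighbors of $u$ lying in $W$ are exactly $v_{\delta-k+2},\dots,v_\delta$, so $u$ has only $k-1<k$ neighbors in $W$. Second, each $\{v_i\}$ is not a total $k$-dominating set, by the elementary fact above. Third, for each $i$ the union $W\cup\{v_i\}$ \emph{is} a total $k$-dominating set: writing $R_i=\{v_1,\dots,v_{\delta-k+1}\}\setminus\{v_i\}$ we have $W\cup\{v_i\}=V(G)\setminus R_i$ with $|R_i|=\delta-k$, so every vertex $z$ satisfies $|N(z)\cap(W\cup\{v_i\})|\ge \deg(z)-|R_i|\ge \delta-(\delta-k)=k$. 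Hence each singleton $\{v_i\}$ forms a total $k$-coalition with $W$; in particular every part of $\Omega$ has a partner, so $\Omega$ is a total $k$-coalition partition.

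The only real obstacle is balancing the two opposing demands on $W$: it must fail to be total $k$-dominating on its own, yet become total $k$-dominating after adjoining a single vertex. This is resolved by the counting bound in the third check — deleting merely $\delta-k$ of the chosen neighbors can lower any vertex's neighbor count by at most $\delta-k$, which (as all degrees are at least $\delta$) never pushes it below $k$ — while the minimum-degree vertex $u$ is exactly calibrated to lose precisely one neighbor short of the threshold inside $W$. The degenerate ranges (e.g.\ $\delta=k$, which yields a two-part partition, and $k=1$) require no separate treatment, since the same computation covers them uniformly.
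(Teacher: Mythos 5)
Your proof is correct, and it takes a genuinely different route from the paper's own proof of Proposition~\ref{Existence}. The paper starts from a $d_{\times k,t}(G)$-partition (which exists because $V(G)$ itself is a total $k$-dominating set), trims all but one class to \emph{minimal} total $k$-dominating sets, splits each minimal class into two halves --- each half fails to be total $k$-dominating by minimality, so the halves form a coalition --- and then handles the leftover class by a short case analysis (extract a minimal total $k$-dominating subset, and either pair the residue with one of its halves or merge it into one of them). Your argument instead builds one partition explicitly around a minimum-degree vertex $u$: singletons $\{v_1\},\dots,\{v_{\delta-k+1}\}$ from $N(u)$ plus the complement $W$, with the counting bound $|N(z)\cap(W\cup\{v_i\})|\ge \deg(z)-(\delta-k)\ge k$ doing all the work, and $u$ calibrated to witness that $W$ alone falls one neighbor short. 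All your checks are sound, including the degenerate cases $\delta=k$ and $k=1$. In fact your construction is precisely the one the paper deploys later, in the proof of Theorem~\ref{lower}, so your single argument yields both existence and the lower bound $\TC_k(G)\ge \delta(G)-k+2$ at once, and it is more elementary (no minimality extractions, no case analysis). What the paper's domatic route buys instead is the byproduct recorded immediately after Proposition~\ref{Existence}, namely $\TC_k(G)\ge 2d_{\times k,t}(G)$, a bound your construction does not produce and which is incomparable in general with $\delta(G)-k+2$.
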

\begin{proof}
Let $\Omega=\{V_{1},\ldots,V_{|\Omega|}\}$ be a $d_{\times k,t}(G)$-partition. Then $|\Omega|=d_{\times k,t}(G)$. We may assume that $V_{1},\ldots,V_{|\Omega|-1}$ are minimal total $k$-dominating sets. Otherwise, we replace them with minimal total $k$-dominating sets $V_{1}'\subseteq V_{1},\ldots,V_{|\Omega|-1}'\subseteq V_{|\Omega|-1}$ respectively, and replace $V_{|\Omega|}$ with $V_{|\Omega|}\cup\big{(}\cup_{i=1}^{|\Omega|-1}(V_{i}\setminus V_{i}')\big{)}$. Let $\{V_{i,1},V_{i,2}\}$ be any partition of $V_{i}$ for each $i\in[|\Omega|-1]$. It is then clear, by definitions, that $V_{i,1}$ and $V_{i,2}$ form a total $k$-coalition in $G$. If $V_{|\Omega|}$ turns out to be a minimal total $k$-dominating set, then $\Theta=\{V_{i,1},V_{i,2}\}_{i=1}^{|\Omega|}$ is a total $k$-coalition partition in $G$, in which $\{V_{|\Omega|,1},V_{|\Omega|,2}\}$ is any partition of $V_{|\Omega|}$. Otherwise, we replace $V_{|\Omega|}$ with a minimal total $k$-dominating set $V_{|\Omega|}'\subseteq V_{|\Omega|}$ and set $V_{|\Omega|}''=V_{|\Omega|}\setminus V_{|\Omega|}'$. Notice that $V_{|\Omega|}''$ is not a total $k$-dominating set in $G$ as $\Omega$ is a $d_{\times k,t}(G)$-partition. Let $\{V_{|\Omega|,1}',V_{|\Omega|,2}'\}$ be any partition of $V_{|\Omega|}'$. If $V_{|\Omega|}''$ forms a total $k$-coalition with $V_{|\Omega|,1}'$ or $V_{|\Omega|,2}'$, then $\{V_{i,1},V_{i,2}\}_{i=1}^{|\Omega|-1}\cup \{V_{|\Omega|,1}',V_{|\Omega|,2}',V_{|\Omega|}''\}$ will be a total $k$-coalition partition. So, we assume that neither $V_{|\Omega|,1}'$ nor $V_{|\Omega|,2}'$ forms a total $k$-coalition with $V_{|\Omega|}''$. In such a case, $V_{|\Omega|,1}'\cup V_{|\Omega|}''$ is a total $k$-coalition partner of $V_{|\Omega|,2}'$. Therefore, $\{V_{i,1},V_{i,2}\}_{i=1}^{|\Omega|-1}\cup \{V_{|\Omega|,1}'\cup V_{|\Omega|}'',V_{|\Omega|,2}'\}$ is a desired partition.
\end{proof}

Invoking the proof of Proposition \ref{Existence}, we deduce that $\TC_{k}(G)\geq2d_{\times k,t}(G)$ for any graph $G$ with $\delta(G)\geq k$.

%%%%%%%%%%%%%%%%%%%%%%%%%%%%%%%%%%%%%%%%%%%%%%%%%%%

\section{Total $k$-coalition with an emphasis on $k=2$}\label{sec:bounds}

First, we present general lower and upper bounds on the total $k$-coalition number of a graph in terms of minimum and maximum degrees, respectively. Then, with emphasis on $k=2$, we give two upper bounds on $\TC_{2}(G)$ in terms of both minimum and maximum degrees, and show that they are sharp by exhibiting an infinite family of graphs, which is illustrated in Example \ref{Exam}. 

\begin{theorem}\label{lower}
For any graph $G$ with minimum degree at least $k$, $\TC_{k}(G)\geq \delta(G)-k+2$. This bound is sharp.
\end{theorem}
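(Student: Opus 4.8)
The plan is to prove the bound by exhibiting, for an arbitrary graph $G$ with minimum degree $\delta:=\delta(G)\ge k$, one explicit total $k$-coalition partition having exactly $\delta-k+2$ parts. Fix a vertex $v$ of minimum degree and write $N(v)=\{u_1,\ldots,u_\delta\}$. I would take
$$\Omega=\big\{\{u_1\},\ldots,\{u_{\delta-k+1}\},\,R\big\},\qquad R=V(G)\setminus\{u_1,\ldots,u_{\delta-k+1}\},$$
so that $|\Omega|=(\delta-k+1)+1=\delta-k+2$, and $R$ contains $v$ together with the $k-1$ remaining neighbors $u_{\delta-k+2},\ldots,u_\delta$. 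The task then splits into checking that no block is itself a total $k$-dominating set and that each block has a coalition partner.

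The first check is immediate by testing at $v$ alone: each singleton gives $|N(v)\cap\{u_i\}|=1<k$ (for $k\ge 2$), and $|N(v)\cap R|=k-1<k$, so no block is total $k$-dominating. The crux of the argument is the second check, namely that $R$ and each singleton $\{u_i\}$ are mutual partners, i.e.\ that $R\cup\{u_i\}$ is total $k$-dominating for every $i$. The key observation is that $R\cup\{u_i\}=V(G)\setminus\big(\{u_1,\ldots,u_{\delta-k+1}\}\setminus\{u_i\}\big)$ is obtained from $V(G)$ by deleting only $\delta-k$ vertices. Consequently any vertex $w$ loses at most $\delta-k$ of its neighbors, leaving at least $|N(w)|-(\delta-k)\ge \delta-(\delta-k)=k$ of them inside $R\cup\{u_i\}$; hence $R\cup\{u_i\}$ is total $k$-dominating. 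Thus every part of $\Omega$ forms a total $k$-coalition with another part, and $\TC_{k}(G)\ge\delta-k+2$.

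For sharpness I would use the complete graph $K_{k+1}$, where $\delta=k$ and the bound predicts $\TC_{k}=2$. In $K_{k+1}$ a set $S$ with $|S|\le k$ leaves each of its own vertices with only $|S|-1<k$ neighbors inside $S$, so the unique total $k$-dominating set is $V(K_{k+1})$. Therefore every block of a coalition partition has size at most $k$, and for a block $B$ with partner $B'$ one needs $|B|+|B'|\ge k+1$; summing sizes then forces at most two blocks, while two blocks of sizes $1$ and $k$ do form a coalition. Hence $\TC_{k}(K_{k+1})=2=\delta(K_{k+1})-k+2$.

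The only genuinely delicate point is the union step, and it is exactly where the number $\delta-k+1$ of isolated neighbors is forced: deleting one further neighbor of $v$ would strip $\delta-k+1$ vertices from some union and could drop a minimum-degree vertex below $k$ neighbors. I would also flag the degenerate case $k=1$, where a singleton $\{u_i\}$ fails to be non-dominating precisely when $u_i$ is a universal vertex; this boundary situation requires only a short separate remark and does not affect the generic argument for $k\ge 2$.
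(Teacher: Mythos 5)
Your construction is exactly the paper's: isolate $\delta(G)-k+1$ neighbors of a minimum-degree vertex $v$ as singleton blocks, let the remainder $R$ (which retains exactly $k-1$ neighbors of $v$) form the last block, and verify each coalition by the same count, namely that each union $R\cup\{u_i\}$ omits only $\delta(G)-k$ vertices and so leaves every vertex at least $k$ neighbors; your sharpness witness $K_{k+1}$ is just the case $n=k+1$ of the paper's example $\TC_{k}(K_n)=n-k+1$. One small correction: your closing caveat about $k=1$ is unnecessary and its justification is wrong --- a singleton $\{u_i\}$ is \emph{never} a total ($1$-)dominating set in a simple graph, universal vertex or not, since $u_i$ has no neighbor in $\{u_i\}$ (you have confused open with closed neighborhoods here), so the argument covers $k=1$ verbatim once you test nondomination of the singletons at $u_i$ rather than at $v$.
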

\begin{proof}
Let $v\in V(G)$ be a vertex of minimum degree and let $N(v)=\{v_{1},\ldots,v_{\delta(G)}\}$. We set $V'=V(G)\setminus \{v_{1},\ldots,v_{\delta(G)-k+1}\}$ and $V_{i}=\{v_{i}\}$ for $i\in [\delta(G)-k+1]$. Obviously, no set $V_{i}$ is a total $k$-dominating set in $G$. Moreover, since $v$ has precisely $k-1$ neighbors in $V'$, it follows that $V'$ is not a total $k$-dominating set in $G$ either. Let $V_{i}$ be any set where $i\in [\delta(G)-k+1]$ and $u$ be any vertex in $G$. On the other hand, because $|N(u)\cap \big(V(G)\setminus(V'\cup V_{i})\big)|\leq \delta(G)-k\leq \deg_G(u)-k$, it follows that $u$ is adjacent to at least $k$ vertices in $V'\cup V_{i}$. Hence, $V_{i}$ and $V'$ form a total $k$-coalition in $G$ for each $i\in [\delta(G)-k+1]$. The above discussion shows that $\Omega=\big{\{}V_{1},\ldots,V_{\delta(G)-k+1},V'\big{\}}$ is a total $k$-coalition partition in $G$. Thus, $\TC_{k}(G)\geq|\Omega|=\delta(G)-k+2$.

To see that the bound is sharp,  consider the cycle $C_{n}$ and the complete graph $K_{n}$, for which we have $\TC_{k}(C_{n})=2$ (for $k=2$) and $\TC_{k}(K_{n})=n-k+1$ (for each positive integer $k$ with $n\geq k+1$).
\end{proof}

Note that no two disjoint subsets $A,B\subseteq V(G)$ with $|A\cup B|\leq k$ form a total $k$-coalition in any graph $G$ with $\delta(G)\geq k$. This leads to the clear upper bound $\TC_{k}(G)\leq|V(G)|-k+1$. However, an infinite family of nontrivial graphs attain this upper bound. Recall that $G\vee H$ denotes the join of graphs $G$ and $H$.

\begin{proposition}\label{Large}
Let $G$ be a graph of order $n$ with $\delta(G)\geq k\geq2$. Then, $\TC_{k}(G)\leq n-k+1$ holds with equality if and only if $G\cong K_{k}\vee G'$, where $G'$ is any graph of order $n-k$.
\end{proposition}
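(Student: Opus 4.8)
The plan is to split the argument into the inequality itself, the sufficiency of the structural condition, and its necessity. For the inequality I would first isolate the elementary fact that \emph{every} total $k$-dominating set $S$ has $|S|\ge k+1$: any vertex $w\in S$ needs $k$ neighbours inside $S$ and is not among them, so $|S\setminus\{w\}|\ge k$. Now take a $\TC_k(G)$-partition $\Omega=\{V_1,\ldots,V_t\}$; by Theorem~\ref{lower} we have $t\ge\delta(G)-k+2\ge 2$, so $\Omega$ contains at least one coalition pair $\{V_i,V_j\}$. Since $V_i\cup V_j$ is a total $k$-dominating set, $|V_i|+|V_j|\ge k+1$, and bounding each of the remaining $t-2$ parts below by $1$ gives $n\ge (k+1)+(t-2)$, that is $\TC_k(G)=t\le n-k+1$.

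For sufficiency I would verify the bound is met by $K_k\vee G'$. Writing the $K_k$ as $A=\{x_1,\ldots,x_k\}$, consider the partition consisting of $A$ together with the $n-k$ singletons formed by the vertices of $G'$. The set $A$ is not a total $k$-dominating set, because each $x_i$ has only the $k-1$ other vertices of $A$ as neighbours inside $A$. On the other hand, for any vertex $w$ of $G'$ the set $A\cup\{w\}$ \emph{is} a total $k$-dominating set: every vertex of $G'$ sees all $k$ vertices of $A$, and each $x_i$ sees the remaining $k-1$ vertices of $A$ together with $w$. Hence each singleton forms a coalition with $A$ (and $A$ with any singleton), producing a total $k$-coalition partition of size $n-k+1$, which combined with the upper bound forces equality.

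For necessity, assume $\TC_k(G)=n-k+1$ and record the total excess $\sum_{i}(|V_i|-1)=n-t=k-1$. The key observation is that for \emph{any} coalition pair $\{V_i,V_j\}$ the inequality $(|V_i|-1)+(|V_j|-1)\ge k-1$ already exhausts the total excess; hence $|V_i|+|V_j|=k+1$ and every part outside this pair is a singleton. This leaves two configurations: either there is a single non-singleton part $A$ (necessarily $|A|=k$) to which every singleton is partnered, or there are exactly two non-singleton parts partnering each other with no singletons at all, which forces $n=k+1$ and hence $G\cong K_{k+1}\cong K_k\vee K_1$. In the first configuration I would run the adjacency extraction: picking any singleton $\{v\}$, the relation $|N(v)\cap A|\ge k$ together with $|A|=k$ forces $v$ to be adjacent to all of $A$; applying the total $k$-domination condition to each $x\in A$ inside $A\cup\{v\}$ then forces $x$ to be adjacent to the remaining $k-1$ vertices of $A$. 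Thus $A$ induces $K_k$ and every vertex outside $A$ is joined to all of $A$, i.e. $G\cong K_k\vee G[V(G)\setminus A]$.

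The main obstacle I anticipate is the necessity direction, specifically controlling the part-size structure forced by equality: one must rule out three or more non-singleton parts and, crucially, show that the presence of any singleton already collapses the two-non-singleton configuration (by applying the ``all other parts are singletons'' consequence to that singleton's own coalition pair), thereby isolating the degenerate $K_{k+1}$ case. Once the unique non-singleton part of size exactly $k$ is pinned down, the adjacency argument is routine, but it relies on $n\ge k+1$ to guarantee that a singleton exists; this holds because a graph with $\delta(G)\ge k$ admits a total $k$-dominating set and therefore has at least $k+1$ vertices.
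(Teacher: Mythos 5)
Your proposal is correct and follows essentially the same route as the paper: the size bound $|V_i\cup V_j|\ge k+1$ for a coalition pair yields $n\ge (k+1)+(t-2)$, equality forces a unique pair of total size $k+1$ with all other parts singletons, and the adjacency extraction from coalitions with the size-$k$ part gives $G\cong K_k\vee G'$, with the converse checked directly. Your explicit treatment of the degenerate two-non-singleton configuration (forcing $n=k+1$ and $G\cong K_{k+1}$) is slightly more careful than the paper, which absorbs this case into a ``without loss of generality'' normalization of the pair to sizes $k$ and $1$.
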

\begin{proof}
Suppose first that $\TC_{k}(G)=n-k+1$ and that $\Omega=\{V_{1},\ldots,V_{|\Omega|}\}$ is a $\TC_{k}(G)$-partition. Then $|\Omega|=n-k+1$. Letting $V_{1}$ form a total $k$-coalition with $V_{2}$, we get 
\begin{center}
$n=|V_{1}\cup V_{2}|+\sum_{i=3}^{|\Omega|}|V_{i}|\geq 
(k+1)+|\Omega|-2$.
\end{center}
Due to this, the equality $|\Omega|=n-k+1$ guarantees that
\begin{enumerate}
\item[(i)] $|V_{1}\cup V_{2}|=k+1$ and $|V_{i}|=1$ for each $i\in[|\Omega|]\setminus \{1,2\}$,
\item[(ii)] $G[V_{1}\cup V_{2}]\cong K_{k+1}$, and
\item[(iii)] every singleton set $|V_{i}|$, for $i\in[|\Omega|]\setminus \{1,2\}$, forms a total $k$-coalition with $V_{1}$ or $V_{2}$.
\end{enumerate}

By taking the above statements into account, without loss of generality, we may assume that $|V_{1}|=k$ and $|V_{2}|=1$. Therefore, $\Omega=\{V_{1}\}\cup \big{\{}\{v\}\mid v\in V(G)\setminus V_{1}\}\big{\}}$, in which $\{v\}$ forms a total $k$-coalition with $V_{1}$ for each $v\in V(G)\setminus V_{1}$. In particular, $vx\in E(G)$ for all vertices $x\in V_{1}$ and $v\in V(G)\setminus V_{1}$. Therefore, $G\cong K_{k}\vee G'$, where $G'=G[V(G)\setminus V_{1}]$.

Conversely, assume that $G\cong K_{k}\vee G'$, in which $G'$ is any graph of order $n-k$. It is then easy to see that $\{V(K_{k})\}\cup \big{\{}\{v\}\mid v\in V(G')\big{\}}$ is a total $k$-coalition partition in $G$ of cardinality $n-k+1$, and hence $\TC_{k}(G)\geq n-k+1$. This leads to the desired equality due to the upper bound $\TC_{k}(G)\leq n-k+1$.
\end{proof}

The following lemma will turn out to be useful in several places of this paper.  

\begin{lemma}\label{L1}
Let $G$ be a graph with minimum degree at least $k$ and let $\Omega$ be a $\TC_{k}(G)$-partition. If $A\in \Omega$, then $A$ forms a total $k$-coalition with at most $\Delta(G)-k+1$ sets in $\Omega$.
\end{lemma}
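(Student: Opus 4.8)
The plan is to extract a single ``witness'' vertex from the failure of $A$ to be a total $k$-dominating set, and then to show that this one vertex alone limits how many partners $A$ can have. Since $A$ is a part of the coalition partition $\Omega$, it forms a total $k$-coalition with some set, and in particular $A$ itself is \emph{not} a total $k$-dominating set. Hence there exists a vertex $v\in V(G)$ with $a:=|N(v)\cap A|\le k-1$, and I fix such a $v$ once and for all.

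Next, suppose $A$ forms a total $k$-coalition with the sets $B_{1},\ldots,B_{t}\in\Omega$. For each $j$, the union $A\cup B_{j}$ is a total $k$-dominating set, so in particular $v$ has at least $k$ neighbors in $A\cup B_{j}$; since $A$ and $B_{j}$ are disjoint, this gives $|N(v)\cap B_{j}|\ge k-a$. The crucial point is that the same vertex $v$ serves every partner simultaneously, so each $B_{j}$ must contain at least $k-a$ of the neighbors of $v$ that lie outside $A$.

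Because $B_{1},\ldots,B_{t}$ are distinct parts of $\Omega$, they are pairwise disjoint and each is disjoint from $A$; hence the sets $N(v)\cap B_{j}$ are pairwise disjoint subsets of $N(v)\setminus A$. Counting the neighbors of $v$ therefore yields $t(k-a)\le\sum_{j=1}^{t}|N(v)\cap B_{j}|\le\deg_G(v)-a\le\Delta(G)-a$. It remains to deduce $t\le\Delta(G)-k+1$ from $t(k-a)\le\Delta(G)-a$. Since $a\le k-1$ we have $k-a\ge 1$, and the identity $(\Delta(G)-k+1)(k-a)-(\Delta(G)-a)=(\Delta(G)-k)(k-a-1)\ge0$ (using $\Delta(G)\ge\delta(G)\ge k$ and $a\le k-1$) shows that $\frac{\Delta(G)-a}{k-a}\le\Delta(G)-k+1$, which gives the claim.

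I expect the only genuine subtlety to be this last step: the raw count produces a bound $t\le(\Delta(G)-a)/(k-a)$ that depends on the witness through $a$, and one must verify that it never exceeds the clean value $\Delta(G)-k+1$. The displayed identity settles this uniformly over all admissible $a$; equivalently, the bound is worst precisely when $a=k-1$, the case realized by a vertex that is \emph{just barely} under-dominated by $A$, which is exactly where the value $\Delta(G)-k+1$ is attained. Everything else is a direct application of the definition of a total $k$-dominating set together with the pairwise disjointness of the parts of $\Omega$; notably, maximality of $\Omega$ is not needed beyond guaranteeing that $A$ has a partner and so is not itself total $k$-dominating.
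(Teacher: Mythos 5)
Your proof is correct and takes essentially the same approach as the paper: both fix a witness vertex $v$ with $|N(v)\cap A|\le k-1$ and bound the number of partners by counting the neighbors of $v$ across the pairwise disjoint partner sets. The paper's bookkeeping is slightly leaner---it uses $|N(v)\cap A_i|\ge 1$ for all partners but one and $|N(v)\cap(A\cup A_t)|\ge k$ for the last, giving $\Delta(G)\ge (t-1)+k$ at once and avoiding your algebraic comparison of $(\Delta(G)-a)/(k-a)$ with $\Delta(G)-k+1$---but that is a cosmetic difference, not a different route.
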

\begin{proof}
Since $A$ is not a total $k$-dominating set in $G$, there exists a vertex $v$ such that $|N(v)\cap A|\leq k-1$. Let $A$ form a total $k$-coalition with $A_{1},\ldots,A_{t}\in \Omega$. By definition and since $A$ does not totally $k$-dominate $v$, it follows that $v$ has at least $k$ neighbors in $A\cup A_{t}$ and $|N(v)\cap A_{i}|\geq1$ for each $i\in [t]$. Then  
$$\Delta(G)\geq|N(v)|\geq|N(v)\cap A_{1}|+\cdots+|N(v)\cap A_{t-1}|+|N(v)\cap (A\cup A_{t})|\geq t-1+k\,,$$
which proves the result. 
\end{proof}

Associated with any total $k$-coalition partition $\Omega=\{V_{1},\ldots,V_{|\Omega|}\}$ in a graph $G$ with $\delta(G)\geq k$, the \textit{total $k$-coalition graph} $\TC_k\mathrm{G}(G,\Omega)$ has the set of vertices $\Omega$, in which two vertices $V_{i}$ and $V_{j}$ are adjacent if they form a total $k$-coalition in $G$. Recall that $\alpha(G)$ and $\beta(G)$ denote the independence number and the vertex cover number of $G$, respectively.

\begin{lemma}\label{covering}
Let $G$ be a graph of minimum degree at least $2$. If $\Omega$ is a total $2$-coalition partition of $G$, then the following statements hold.
\begin{enumerate}
\item[\emph{(}i\emph{)}] $\Delta\big{(}\TC_{2}G(G,\Omega)\big{)}\leq \Delta(G)-1$, and
\item[\emph{(}ii\emph{)}] $\beta\big{(}\TC_{2}G(G,\Omega)\big{)}\leq \delta(G)-1$.
\end{enumerate}
\end{lemma}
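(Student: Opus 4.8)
The plan is to handle the two parts separately: part~(i) is essentially a restatement of Lemma~\ref{L1}, while part~(ii) requires a short covering argument whose only delicate point is gaining the final unit.

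For part~(i), I would invoke Lemma~\ref{L1} with $k=2$. By the definition of the total $2$-coalition graph, the degree of a vertex $A$ in $\TC_{2}G(G,\Omega)$ is exactly the number of sets in $\Omega$ with which $A$ forms a total $2$-coalition. Lemma~\ref{L1} bounds this number by $\Delta(G)-k+1=\Delta(G)-1$, and since this holds for every $A\in\Omega$, we conclude $\Delta\big(\TC_{2}G(G,\Omega)\big)\le\Delta(G)-1$.

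For part~(ii), I would exhibit a vertex cover of $\TC_{2}G(G,\Omega)$ of size at most $\delta-1$, where $\delta=\delta(G)$. Fix a vertex $v$ of degree $\delta$ with $N(v)=\{v_{1},\dots,v_{\delta}\}$, and let $C\subseteq\Omega$ be the collection of parts that contain at least one neighbor of $v$. The key observation is that $C$ already covers every edge: if $\{A,B\}$ is a total $2$-coalition, then $A\cup B$ is a total $2$-dominating set, so $|N(v)\cap(A\cup B)|\ge 2$; in particular some neighbor of $v$ lies in $A$ or in $B$, placing that endpoint in $C$. Since the $\delta$ neighbors of $v$ occupy at most $\delta$ parts, this already yields $\beta\big(\TC_{2}G(G,\Omega)\big)\le\delta$.

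The main obstacle, and the only subtle step, is trimming this to $\delta-1$. If the vertices $v_{1},\dots,v_{\delta}$ do not lie in pairwise distinct parts, then $|C|\le\delta-1$ and we are done. Otherwise they occupy exactly $\delta$ distinct parts, each containing precisely one neighbor of $v$, and I would claim that deleting from $C$ the part $P$ that contains $v_{\delta}$ leaves a cover. Indeed, suppose an edge $\{A,B\}$ had neither endpoint in $C\setminus\{P\}$. Then neither $A$ nor $B$ contains a neighbor of $v$ other than possibly $v_{\delta}$, whence $|N(v)\cap(A\cup B)|\le 1$, contradicting that $A\cup B$ totally $2$-dominates $v$. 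Hence $C\setminus\{P\}$ is a vertex cover of size $\delta-1$, giving $\beta\big(\TC_{2}G(G,\Omega)\big)\le\delta-1$ and completing the proof.
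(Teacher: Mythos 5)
Your proof is correct and takes essentially the same approach as the paper: part~(i) is identical, and in part~(ii) your cover $C\setminus\{P\}$ is precisely the complement of the independent set $\mathcal{I}_A=\{A\}\cup(\Omega\setminus\Omega')$ (with $A=P$) that the paper builds from the same minimum-degree vertex and the same dichotomy on whether $N(v)$ meets $\delta(G)$ distinct parts. The only difference is cosmetic: the paper bounds the independence number and invokes Gallai's identity $\alpha+\beta=|V|$, while you exhibit the vertex cover directly, making the Gallai step implicit.
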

\begin{proof}
$(i)$ The statement follows from Lemma~\ref{L1} with $k=2$.

$(ii)$ Let $v$ be a vertex of minimum degree in $G$. We set $\Omega'=\{A\in \Omega\mid N(v)\cap A\neq \emptyset\}$. Obviously, $|\Omega'|\leq \delta(G)$. Suppose that $|\Omega'|=\delta(G)$. This shows that every set in $\Omega'$ contains precisely one vertex from $N(v)$. In such a situation, $v$ has at most one neighbor in $A\cup(\cup_{S\in \Omega\setminus \Omega'}S)$ for each $A\in \Omega'$. This shows that no two sets in $\mathcal{I}_A=\{A\}\cup \{S\mid S\in \Omega\setminus \Omega'\}$ form a total $2$-coalition in $G$, in which $A$ is any set in $\Omega'$. Equivalently, $\mathcal{I}_A$ is an independent set in $\TC_{2}G(G,\Omega)$, and hence $\alpha\big{(}\TC_{2}G(G,\Omega)\big{)}\geq|\mathcal{I}_A|=|\Omega|-\delta(G)+1$. Using the equality $\alpha(H)+\beta(H)=|V(H)|$, which holds for each graph $H$ (the Gallai Theorem), we infer that $\beta\big{(}\TC_{2}G(G,\Omega)\big{)}\leq \delta(G)-1$. Moreover, if $|\Omega'|<\delta(G)$, then no two sets in $\{S\mid S\in \Omega\setminus \Omega'\}$ form a total $2$-coalition in $G$. In a similar fashion, the inequality $\beta\big{(}\TC_{2}G(G,\Omega)\big{)}\leq \delta(G)-1$ is obtained. 
\end{proof}

Apart from bounding the total $2$-coalition number of a graph, the following result together with Theorem \ref{lower} will enable us to obtain the exact value of this parameter for cubic graphs and for $4$-regular graphs.

\begin{theorem}\label{Combination}
If $G$ is a graph with $\delta(G)\ge 2$, then 
\begin{center}
$\TC_{2}(G)\leq \max\Big\{\Delta(G),\left\lfloor \frac{\delta(G)}{2}\right\rfloor(\Delta(G)-4)+\delta(G)\Big\}$.
\end{center}
Moreover, this bound is sharp.
\end{theorem}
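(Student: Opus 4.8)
The plan is to fix a $\TC_{2}(G)$-partition $\Omega$ with $|\Omega|=\TC_2(G)$ and a vertex $v$ of minimum degree, and to classify the parts of $\Omega$ by how many neighbours of $v$ they contain. Writing $\delta=\delta(G)$, $\Delta=\Delta(G)$, and $a_A=|A\cap N(v)|$ for $A\in\Omega$, I partition $\Omega$ into $\cP_0=\{A:a_A=0\}$, $\cP_1=\{A:a_A=1\}$ and $\cP_{\ge 2}=\{A:a_A\ge 2\}$. Since the $\delta$ neighbours of $v$ are distributed among the parts, $\sum_{A\in\Omega}a_A=\delta$, so $|\cP_1|+2|\cP_{\ge 2}|\le\delta$ and in particular $|\cP_{\ge 2}|\le\lfloor\delta/2\rfloor$. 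The elementary structural observation is that any total $2$-coalition $\{A,B\}$ must in particular totally $2$-dominate $v$, forcing $a_A+a_B\ge 2$; hence in the coalition graph every part of $\cP_0$ can only have partners inside $\cP_{\ge 2}$.

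Everything then reduces to the following claim, which strengthens Lemma~\ref{L1} (giving $\Delta-1$) in this restricted setting: each $A\in\cP_{\ge 2}$ forms a total $2$-coalition with at most $\Delta-3$ parts of $\cP_0$. Granting this, the count is routine. Each part of $\cP_0$ has at least one partner and all of them lie in $\cP_{\ge 2}$, so $|\cP_0|$ is at most the number of coalition edges between $\cP_0$ and $\cP_{\ge 2}$, which by the claim is at most $|\cP_{\ge 2}|(\Delta-3)$. Therefore
\[
\TC_{2}(G)=|\cP_0|+|\cP_1|+|\cP_{\ge 2}|\le |\cP_{\ge 2}|(\Delta-3)+|\cP_1|+|\cP_{\ge 2}|=|\cP_{\ge 2}|(\Delta-2)+|\cP_1|,
\]
and substituting $|\cP_1|\le\delta-2|\cP_{\ge 2}|$ yields $\TC_2(G)\le |\cP_{\ge 2}|(\Delta-4)+\delta$. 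For $\Delta\ge 4$ the right-hand side is nondecreasing in $|\cP_{\ge 2}|\le\lfloor\delta/2\rfloor$, producing the second term of the maximum, while the degenerate situations ($\cP_{\ge 2}=\emptyset$, where $\cP_0$ must also be empty, or $\Delta\le 3$) are handled directly and give $\TC_2(G)\le\Delta$.

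To establish the claim I would fix $A\in\cP_{\ge 2}$ together with its partners $B_1,\dots,B_t\in\cP_0$ and exploit that every $A\cup B_j$ is totally $2$-dominating whereas $A$ is not. Choose a vertex $x$ with $|N(x)\cap A|\le 1$; since $a_A\ge 2$ we have $x\ne v$, and each $B_j$ must repair the deficiency, i.e.\ $|N(x)\cap B_j|\ge 2-|N(x)\cap A|$. If some such $x$ has no neighbour in $A$, the disjoint sets $B_j$ force $\deg(x)\ge 2t$, so $t\le\lfloor\Delta/2\rfloor\le\Delta-3$ once $\Delta$ is not too small. The delicate subcase is when $A$ is a total dominating set (every vertex has a neighbour in $A$) but some $x$ has exactly one neighbour in $A$: the disjoint $B_j$ then only give $\deg(x)\ge t+1$, i.e.\ the weaker $t\le\Delta-1$.

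Closing this gap of two is where the real difficulty lies, and I expect it to be the main obstacle. I would try to locate, for a suitably chosen deficient vertex $x$, two further neighbours of $x$ lying outside $B_1\cup\cdots\cup B_t$, leveraging that $A$ contains at least two neighbours of $v$, none of which lies in any $B_j$ (as $B_j\in\cP_0$), together with the fact that each $B_j$ is itself not totally $2$-dominating. This should come down to a short case analysis on the positions of $v$ and of the witness $x$ relative to $A$, with the genuinely small values of $\Delta$ absorbed by the $\Delta$ term of the maximum. Sharpness would then be certified by the family in Example~\ref{Exam}: there $|\cP_{\ge 2}|=\lfloor\delta/2\rfloor$ and every part of $\cP_{\ge 2}$ has exactly $\Delta-3$ partners in $\cP_0$, so all the inequalities above become equalities for even $\delta$.
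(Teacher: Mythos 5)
Your setup ($\cP_0,\cP_1,\cP_{\ge 2}$, the observation that parts of $\cP_0$ can only have partners in $\cP_{\ge 2}$, the bound $|\cP_{\ge 2}|\le\lfloor\delta/2\rfloor$, and the final counting) matches the paper's skeleton exactly. But the key claim you reduce everything to --- each $A\in\cP_{\ge 2}$ has at most $\Delta-3$ partners in $\cP_0$ --- is false, and the ``gap of two'' you flag cannot be closed by finding extra neighbours of the deficient vertex $x$. Take $G=K_2\vee\overline{K_m}$ with $V(K_2)=\{a,b\}$: here $\delta=2$, $\Delta=m+1$, and by Proposition~\ref{Large} the partition $\Omega=\{\{a,b\}\}\cup\{\{v\}: v\in V(\overline{K_m})\}$ is an optimal total $2$-coalition partition with $\TC_2(G)=\Delta$. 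Choosing $u$ of minimum degree (a vertex of $\overline{K_m}$), we get $\cP_{\ge 2}=\{\{a,b\}\}$, $\cP_1=\emptyset$, and $\{a,b\}$ forms a total $2$-coalition with all $m=\Delta-1$ singletons of $\cP_0$. Your count would then give $\TC_2(G)\le 1\cdot(\Delta-4)+2=\Delta-2$, which is false; and for the witness $x\in\{a,b\}$ every neighbour outside $A$ lies in some $B_j$, so the repair you propose has nothing to work with. Note that $\Delta=m+1$ is arbitrarily large here, so these configurations are not absorbed by your ``small $\Delta$'' escape clause --- they are precisely the reason the $\Delta$ term appears in the maximum. (With $G'$ a perfect matching instead of $\overline{K_m}$ one similarly gets $\delta=3$ and $t=\Delta-2$.)

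The paper's proof handles this by a trichotomy on $t_i=|\Omega_i|$, the number of $\cP_0$-partners of $V_i\in\cP_{\ge 2}$, rather than a uniform bound: if $t_i=\Delta-1$ for some $i$, a degree count at a vertex $x$ with $|N(x)\cap V_i|=1$ forces $\Omega=\Omega_i\cup\{V_i\}$, hence $\TC_2(G)=\Delta$ (the first term of the max); if $\max_i t_i=\Delta-2$, the same kind of count shows all $\Omega_j$ nest inside one $\Omega_1$, yielding $\TC_2(G)\le\Delta+\delta-3$, which must then be compared against $\lfloor\delta/2\rfloor(\Delta-4)+\delta$ (this needs $\delta\ge 4$ and $\Delta\ge 5$); only when every $t_i\le\Delta-3$ does your counting apply verbatim. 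Finally, the residual cases are not ``handled directly'': the paper needs Lemma~\ref{covering} for $\delta=2$, a page-long case analysis (Claim~1, Subcases~2.1--2.2) for $\delta=3$, and a separate Claim~2 for $4$-regular graphs before the generic argument with $\Delta\ge 5$ can start. So your proposal correctly identifies the architecture and the sharpness mechanism, but the one lemma carrying the whole load is unproven by your own admission and, as stated, untrue.
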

\begin{proof}
Let $\Omega$ be a $\TC_{2}(G)$-partition. If $\delta(G)=2$, then $\beta\big{(}\TC_{2}G(G,\Omega)\big{)}=1$ by Lemma \ref{covering}$(ii)$. Therefore, $\TC_{2}G(G,\Omega)$ has a universal vertex $V$, that is, $V\in \Omega$ forms a total $2$-coalition with any other set in $\Omega$. So, we have $\TC_{2}(G)\leq \Delta(G)$ in view of Lemma \ref{L1}.  

Now let $\delta(G)=3$ and let $v$ (resp.\ $u$) be a vertex of maximum (resp.\ minimum) degree in $G$. We are going to show that $\TC_{2}(G)\le \Delta(G)$ also in this case. Suppose that $\TC_{2}(G)>\Delta(G)$ and that $\Omega=\{V_{1},\ldots,V_{|\Omega|}\}$ is a $\TC_{2}(G)$-partition. We distinguish two cases depending on the behavior of the sets in $\Omega$.

\medskip\noindent
\textbf{Case 1.} $|N(v)\cap V_{i}|\leq 1$, $i\in[|\Omega|]$. \\
Since no two vertices in $N(v)$ belong to the same set in $\Omega$, we may assume that $N(v)\subseteq V_{1}\cup \cdots \cup V_{\Delta(G)}$. Note that $V_{\Delta(G)+1}$ forms a total $2$-coalition with $V_{j}$ for some $j\in[|\Omega|]$. Since $|N(v)\cap V_{j}|\leq1$ and because $|N(v)\cap(V_{\Delta(G)+1}\cup V_{j})|\geq2$, it follows that $v$ has at least one neighbor in $V_{\Delta(G)+1}$, which contradicts the fact that $\deg_G(v)=\Delta(G)$.

\medskip\noindent
\textbf{Case 2.} There are at least two vertices in $N(v)$ that belong to the same set in $\Omega$. \\
In such a situation, we assume without loss of generality that $N(v)\subseteq V_{1}\cup \cdots \cup V_{p}$ for some $p\in[\Delta(G)-1]$ and that $V_{i}$ has at least one vertex in $N(v)$ for each $i\in[p]$. 
We may assume without loss of generality that $|N(v)\cap V_{1}|\geq2,\ldots,|N(v)\cap V_{t}|\geq2$ for some $t\in[p]$ and that $|N(v)\cap V_{i}|\leq1$ for the remaining sets $V_{i}$ in $\Omega$. The following claim will turn out to be useful.

\medskip\noindent
\textbf{Claim 1.} \textit{For each index $i>t$, the set $V_{i}$ forms a total $2$-coalition with a set $V_{j}$ for some $j\in[t]$.}

\medskip\noindent
\textit{Proof of Claim 1}. We first consider an arbitrary index $i>p$. Suppose $V_{i}$ forms a total $2$-coalition with $V_{j}$ for some $j>t$. Then, the resulting inequality $|N(v)\cap(V_{i}\cup V_{j})|\geq2$ and the fact that $|N(v)\cap V_{j}|\leq1$ imply that $v$ has at least one neighbor in $V_{i}$, in contradiction with the equality $\deg_G(v)=\Delta(G)$. Therefore, $V_{i}$ is a total $2$-coalition partner of $V_{j}$ for some $j\in[t]$. In particular, we may assume that $V_{\Delta(G)}$ forms a total $2$-coalition with $V_{1}$. 

We now consider any index $i\in[p]\setminus[t]$. Note that $V_{i}$ does not form a total $2$-coalition with any set $V_{j}$, with $j>p$, as proved above. Suppose that $V_{i}$ is a total $2$-coalition partner of $V_{j}$ for some $j\in[p]\setminus[t]$. This in particular implies that $|N(u)\cap(V_{i}\cup V_{j})|\geq2$. On the other hand, $|N(u)\cap(V_{1}\cup V_{\Delta(G)})|\geq2$ as $V_{1}$ and $V_{\Delta(G)}$ form a total $2$-coalition in $G$. Therefore, $3=|N(u)|\geq|N(u)\cap(V_{i}\cup V_{j})|+|N(u)\cap(V_{1}\cup V_{\Delta(G)})|\geq4$, a contradiction. Thus, $V_{i}$ is a total $2$-coalition partner of $V_{j}$ for some $j\in[t]$, proving the claim. $(\square)$

\medskip
Invoking Claim 1, we assume without loss of generality that $V_{t+1}$ forms a total $2$-coalition with $V_{1}$. On the other hand, there exists a set $V_{j}\in \Omega$ that does not form a total $2$-coalition with $V_{1}$, because $V_{1}$ is a total $2$-coalition partner of at most $\Delta(G)-1$ sets in $\Omega$ due to Lemma \ref{L1}. We need to consider two more possibilities.

\medskip\noindent
\textbf{Subcase 2.1.} $V_{j}$ forms a total $2$-coalition with $V_{r}$ for some $r\in[|\Omega|]\setminus \{1,t+1\}$.\\
In this case we have $3=|N(u)|\geq|N(u)\cap(V_{1}\cup V_{t+1})|+|N(u)\cap(V_{j}\cup V_{r})|\geq4$, which is impossible.

\medskip\noindent
\textbf{Subcase 2.2.} $V_{j}$ forms a total $2$-coalition with $V_{t+1}$. \\
If $V_{1}$ is a total $2$-coalition partner of a set $V_{r}$ for some $r\in[|\Omega|]\setminus \{t+1\}$, then $3=|N(u)|\geq|N(u)\cap(V_{1}\cup V_{r})|+|N(u)\cap(V_{j}\cup V_{t+1})|\geq4$, a contradiction. Therefore, we infer from the above argument that every set in $\Omega\setminus \{V_{1}\}$ forms a total $2$-coalition with $V_{t+1}$. Hence, $V_{t+1}$ forms a total $2$-coalition with at least $\Delta(G)$ sets in $\Omega$, contradicting Lemma \ref{L1} with $k=2$.

\medskip
By the above, we have proved that 
\begin{equation}\label{small}
\TC_{2}(G)\leq \Delta(G)
\end{equation}
when $\delta(G)\in \{2,3\}$.

\medskip\noindent
\textbf{Claim 2.} \textit{$\TC_{2}(G)=4$ for any $4$-regular graph $G$}.

\medskip\noindent
\textit{Proof of Claim 2}. Note that we already have $\TC_{2}(G)\geq4$ by Theorem \ref{lower}. Let $u\in V(G)$ and let $\Omega=\{V_{1},\ldots,V_{|\Omega|}\}$ be a $\TC_{2}(G)$-partition. Suppose that $|N(u)\cap V_{i}|\leq1$ for each $i\in[|\Omega|]$. So, we may assume without loss of generality that $N(u)\subseteq \cup_{i=1}^{4}V_{i}$ and that $|N(u)\cap V_{i}|=1$ for each $i\in[4]$. If $|\Omega|\geq5$, then let $V_{5}$ form a total $2$-coalition with $V_{j}$ for some $j\in[|\Omega|]$. Since $|N(u)\cap V_{j}|\leq1$ and $V_{5}\cup V_{j}$ is a total $2$-dominating set in $G$, it follows that $u$ has at least one neighbor in $V_{5}$, contradicting the fact that $\deg_{G}(u)=4$. Thus, $\TC_{2}(G)=|\Omega|\leq4$.

Assume that there exists exactly one set $V_{i}\in \Omega$ such that $|N(u)\cap V_{i}|\geq2$. We let, without loss of generality, $i=1$. If $|N(u)\cap V_{1}|\in \{3,4\}$, then every set in $\Omega\setminus \{V_{1}\}$ forms a total $2$-coalition with $V_{1}$. Together with Lemma~\ref{L1} for $k=2$, this shows that $\TC_{2}(G)\leq4$. Now let $|N(u)\cap V_{1}|=2$. Due to this, we can assume that $|N(u)\cap V_{2}|=|N(u)\cap V_{3}|=1$. Similarly, we deduce that every set in $\Omega\setminus \{V_{1},V_{2},V_{3}\}$ is necessarily a total $2$-coalition partner of $V_{1}$ only. If $V_{1}$ forms a total $2$-coalition with $V_{2}$ and $V_{3}$, respectively, then $\TC_{2}(G)\leq4$ by Lemma \ref{L1}. So, we assume that at least one of $V_{2}$ and $V_{3}$, say $V_{2}$, does not form a total $2$-coalition with $V_{1}$. This implies that there exists a vertex $x$ which is not totally $2$-dominated by $V_{1}\cup V_{2}$, and that $V_{2}$ forms a total $2$-coalition with $V_{3}$.

Suppose that $\TC_{2}(G)\geq5$. Since $V_{1}$ forms a total $2$-coalition with $V_{4}$ and $V_{5}$, respectively, we have  $|N(x)\cap(V_{1}\cup V_{4})|\geq2$ and $|N(x)\cap(V_{1}\cup V_{5})|\geq2$. Moreover, we have $|N(x)\cap(V_{2}\cup V_{3})|\geq2$ as $V_{2}$ forms a total $2$-coalition with $V_{3}$. Since $\deg_{G}(x)=4$, it necessarily follows that $|N(x)\cap(V_{1}\cup V_{4})|=|N(x)\cap(V_{1}\cup V_{5})|=|N(x)\cap(V_{2}\cup V_{3})|=2$. Therefore, $|N(x)\cap V_{1}|=2$, in contradiction with the fact that $x$ is not totally $2$-dominated by $V_{1}\cup V_{2}$.

Now suppose that $|N(x)\cap V_{i}|\geq2$ for at least two sets $V_{i}\in \Omega$. Without loss of generality, we let $|N(x)\cap V_{1}|=|N(x)\cap V_{2}|=2$. If one of the sets $V_{1}$ and $V_{2}$ forms a total $2$-coalition with all other sets in $\Omega$, then in view of Lemma~\ref{L1}, we have $\TC_2(G)=4$. Suppose to the contrary that $\TC_2(G)>4$. We may thus assume without loss of generality that $V_1$ forms a total $2$-coalition with $V_3$ and $V_4$, and $V_2$ forms a total $2$-coalition with $V_5$. Since $V_1$ is not a total $2$-dominating set, there is a vertex $v\in V(G)$ such that $|N(v)\cap V_1|\le 1$. If $N(v)\cap V_1=\emptyset$, then $|N(v)\cap V_3|\ge 2$, and $|N(v)\cap V_4|\ge 2$. However, since $\deg_G(v)=4$, there exists no vertex in $N(v)$ that belongs to $V_2\cup V_5$, which is a contradiction to the fact that $V_2$ and $V_5$ form a total $2$-coalition in $G$. The second possibility is that $|N(v)\cap V_1|=1$. Similarly as in the previous case, there is a vertex in $N(v)$ that belongs to $V_3$ and a vertex in $N(v)$ that belongs to $V_4$. Since $\deg_{G}(v)=4$, we derive $|N(v)\cap(V_2\cup V_5)|\le 1$, again a contradiction. This completes the proof of Claim 2. $(\square)$\vspace{1mm}

In view of \eqref{small} and Claim $2$, for graphs $G$ with $\delta(G)\in \{2,3\}$ the desired upper bound holds when $\Delta(G)\leq4$. (Note that by invoking Claim~$2$ and \eqref{small}, $\Delta(G)=4$ leads to $\TC_{2}(G)=\Delta(G)$ or $\TC_{2}(G)\leq \Delta(G)$ when $\delta(G)=4$ or $\delta(G)\in \{2,3\}$, respectively. Moreover, if $\Delta(G)\leq3$, then the resulting inclusion of $\delta(G)$ in $\{2,3\}$ leads to $\TC_{2}(G)\leq \Delta(G)$ by~\eqref{small}.) Assume in the rest that $\Delta(G)\geq5$. Let $u$ be a vertex of minimum degree in $G$. If $|N(u)\cap V_i|\le 1$ for each $i\in[|\Omega|]$, then $\TC_{2}(G)\leq \delta(G)$. Indeed, if there exists a set $V_i\in \Omega$ such that $N(u)\cap V_i=\emptyset$, then $V_i$ does not form a total $2$-coalition with any set in $\Omega$, a contradiction. So, $\TC_2(G)$ is less than or equal to the desired upper bound.  

Now assume that $|N(u)\cap V_{i}|\geq2$ for some $i\in[|\Omega|]$. We may assume, without loss of generality, that $V_{1},\ldots,V_{p}$ are the sets in $\Omega$ having at least two vertices in $N(u)$. Let $n_{i}=|N(u)\cap V_{i}|$ for each $i\in[p]$. It is clear that $p\leq \lfloor \delta(G)/2\rfloor$. Setting 
\begin{center}
$\Omega_{i}=\{V\in \Omega \mid \mbox{$V$ forms a total $2$-coalition with $V_{i}$ and $N(u)\cap V=\emptyset$}\}$
\end{center} 
for each $i\in [p]$, we deduce from Lemma \ref{L1} that $|\Omega_{i}|\leq \Delta(G)-1$.

Assume that $|\Omega_{i}|=\Delta(G)-1$ for some $i\in[p]$. Since $V_{i}$ is not a total $2$-dominating set, there exists a vertex $x\in V(G)$ such that $|N(x)\cap V_{i}|\leq1$. If $N(x)\cap V_{i}=\emptyset$, then $x$ has at least two neighbors in each set $V$ in $\Omega_{i}$ as $V$ is a total $2$-coalition partner of $V_{i}$. Therefore, $\deg_{G}(x)\geq2|\Omega_{i}|=2\Delta(G)-2$, in contradiction with $\Delta(G)\geq5$. So, we have $|N(x)\cap V_{i}|=1$. In such a situation, the vertex $x$ has at least one neighbor in each set $V\in \Omega_{i}$ because $V$ forms a total $2$-coalition with $V_{i}$. In particular, this implies that $x$ has precisely one neighbor in every set in $\Omega_{i}\cup \{V_{i}\}$ and that $\deg_{G}(x)=\Delta(G)$ since $|\Omega_{i}\cup \{V_{i}\}|=\Delta(G)$. Suppose that there exists a set $U\in \Omega\setminus(\Omega_{i}\cup \{V_{i}\})$, which forms a total $2$-coalition with a set $W\in \Omega$. Notice that $|N(x)\cap W|\in\{0,1\}$ if and only if $W\notin \Omega_{i}$ or $W\in \Omega_{i}$, respectively. This shows that $U$ has at least one vertex in $N(u)$ as $U$ and $W$ form a total $2$-coalition in $G$. Therefore, $\deg_G(x)\geq|N(x)\cap V_{i}|+|N(x)\cap(\cup_{V\in \Omega_{i}}V)|+|N(x)\cap U|\geq \Delta(G)+1$, which is impossible. The above argument guarantees that $\Omega=\Omega_{i}\cup \{V_{i}\}$, and hence $\TC_{2}(G)=\Delta(G)$.

From here on, in view of the above discussion, we assume that $|\Omega_{i}|\leq \Delta(G)-2$ for each $i\in [p]$. Moreover,  note that 
\begin{center}
$\Omega=\{V\in \Omega\mid N(u)\cap V\neq \emptyset\}\cup(\bigcup_{i=1}^{p}\Omega_{i})$.
\end{center}
 
We distinguish two cases depending of the behavior of the family $\{\Omega_{i}\}_{i=1}^{p}$.

\medskip\noindent
\textbf{Case A.} $|\Omega_{i}|\leq \Delta(G)-3$ for each $i\in [p]$. \\
In such a situation, since $\delta(G)-(n_{1}+\cdots+n_{p})$ is the number of sets in $\Omega$ that have exactly one vertex in $N(u)$, we can estimate as follows: 
\begin{align*}
\TC_{2}(G) & = |\Omega| \leq p+p(\Delta(G)-3)+\delta(G)-(n_{1}+\cdots+n_{p}) \\
& \leq p(\Delta(G)-2)+\delta(G)-2p = p(\Delta(G)-4)+\delta(G) \\
& \leq \lfloor \delta(G)/2\rfloor(\Delta(G)-4)+\delta(G).
\end{align*}

\noindent
\textbf{Case B.} $|\Omega_{i}|=\Delta(G)-2$ for some $i\in [p]$. \\
Without loss of generality, we may assume that $|\Omega_{1}|=\Delta(G)-2$. Since $V_{1}$ is not a total $2$-dominating set in $G$, there exists a vertex $x$ such that $|N(x)\cap V_{1}|\leq1$. If $N(x)\cap V_{1}=\emptyset$, then the vertex $x$ has at least two neighbors in each set in $\Omega_{1}$. Hence, $\deg_{G}(x)\geq2|\Omega_{1}|\geq2\Delta(G)-4$, contradicting the fact that $\Delta(G)\geq5$. Therefore, $|N(x)\cap V_{1}|=1$. Then, $x$ has at least one neighbor in each set in $\Omega_{1}$. If there exists a set $U\in \Omega_{i}\setminus \Omega_{1}$ for some $i\in [p]$, then 
\begin{center}
$\deg_{G}(x)\geq|N(x)\cap V_{1}|+|N(x)\cap(\cup_{V\in \Omega_{1}}V)|+|N(x)\cap(U\cup V_{i})|\geq \Delta(G)+1$,
\end{center}
a contradiction. This shows that $\Omega_{i}\subseteq \Omega_{1}$ for every $i\in [p]$. Therefore,
\begin{align}
\TC_{2}(G) & = |\Omega| \leq \Delta(G)-1+p-1+\delta(G)-(n_{1}+\cdots+n_{p}) \nonumber \\
& \leq \Delta(G)+\delta(G)-p-2  \nonumber \\
& \leq \Delta(G)+\delta(G)-3\,. \label{Jeeg2}
\end{align}

By \eqref{Jeeg2}, the desired upper bound holds when $\delta(G)\leq3$. So, we assume that $\delta(G)\geq 4$. Since $\Delta(G)\geq5$, by \eqref{Jeeg2} we thus have 
\begin{center}
$\TC_{2}(G)\leq \Delta(G)+\delta(G)-3\leq \lfloor \delta(G)/2\rfloor(\Delta(G)-4)+\delta(G)$\,. 
\end{center}
This completes the proof of the desired upper bound. The sharpness of the bound is presented in Example~\ref{Exam}.
\end{proof}

Using the obtained bounds, we derive the values of the total $k$-coalition numbers of cubic graphs for all $k\ge 2$. In this case, the necessary condition $\delta(G)\geq k$ implies that $k\in \{2,3\}$.

\begin{theorem}\label{cubic}
If $G$ is a cubic graph, then
\[
\TC_{k}(G)=\begin{cases}
3 & \textrm{if}\ \ k=2,\\
2 & \textrm{if}\ \ k=3.
\end{cases}
\]
\end{theorem}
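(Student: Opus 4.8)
The plan is to treat the two values $k=2$ and $k=3$ separately, since the necessary condition $\delta(G)\ge k$ forces $k\in\{2,3\}$ and the two cases rely on quite different ingredients. Throughout, I use that a cubic graph satisfies $\delta(G)=\Delta(G)=3$.

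For $k=2$ both bounds come essentially for free from the earlier results. The lower bound is immediate from Theorem~\ref{lower}: $\TC_{2}(G)\ge \delta(G)-2+2=3$. For the matching upper bound I would substitute $\delta(G)=\Delta(G)=3$ into Theorem~\ref{Combination}, obtaining
\[
\TC_{2}(G)\le \max\Big\{3,\ \big\lfloor\tfrac{3}{2}\big\rfloor(3-4)+3\Big\}=\max\{3,2\}=3\,.
\]
Combining the two inequalities yields $\TC_{2}(G)=3$.

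For $k=3$ the lower bound $\TC_{3}(G)\ge 2$ is again a direct instance of Theorem~\ref{lower}. The substance lies in the upper bound, and here the key structural observation is that \emph{in a cubic graph the only total $3$-dominating set is $V(G)$ itself}. Indeed, the condition $|N(v)\cap S|\ge 3$ together with $\deg_{G}(v)=3$ forces $N(v)\subseteq S$ for every vertex $v$; since $G$ is connected with no isolated vertices, $\bigcup_{v}N(v)=V(G)$, whence $S=V(G)$. Consequently, whenever two disjoint sets $A,B$ form a total $3$-coalition, the union $A\cup B$ is total $3$-dominating and therefore equals $V(G)$. Now let $\Omega$ be any total $3$-coalition partition and let $V_{1}$ form a total $3$-coalition with its partner $V_{j}$. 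Then $V_{1}\cup V_{j}=V(G)$, so every remaining part of $\Omega$ is contained in $V(G)\setminus(V_{1}\cup V_{j})=\emptyset$; as parts of a partition are nonempty, this forces $\Omega=\{V_{1},V_{j}\}$ and hence $\TC_{3}(G)\le 2$. Together with the lower bound this gives $\TC_{3}(G)=2$.

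The only genuinely nontrivial step is the observation that total $3$-domination collapses to $S=V(G)$ in the cubic case; once that is in hand, the upper bound for $k=3$ is a short counting argument and the $k=2$ case is a direct appeal to Theorems~\ref{lower} and~\ref{Combination}. I therefore expect no real obstacle beyond making the $k=3$ rigidity observation explicit and checking that the witnessing partition of size $2$ (for instance the one built in the proof of Theorem~\ref{lower}) is indeed a valid total $3$-coalition partition.
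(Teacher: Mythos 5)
Your proof is correct. The $k=2$ case coincides exactly with the paper's: both derive $3\le \TC_2(G)$ from Theorem~\ref{lower} and $\TC_2(G)\le\max\{3,\lfloor 3/2\rfloor(3-4)+3\}=3$ from Theorem~\ref{Combination}. For $k=3$ your route is genuinely different from, and somewhat cleaner than, the paper's. The paper argues locally: it fixes a vertex $v$, finds by pigeonhole a part $A\in\Omega$ with $|N(v)\cap A|\le 1$, notes that any partner $B$ of $A$ must satisfy $N(v)\subseteq A\cup B$ with $|N(v)\cap B|\ge 2$, and concludes that any third part $C$ has $N(v)\cap C=\emptyset$ and hence cannot find a coalition partner, forcing $|\Omega|=2$. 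You instead prove the global rigidity statement that in a cubic graph the unique total $3$-dominating set is $V(G)$ itself (since $|N(v)\cap S|\ge 3=\deg_G(v)$ forces $N(v)\subseteq S$ for every $v$, and every vertex lies in some neighborhood), so that any total $3$-coalition pair satisfies $A\cup B=V(G)$ and a partition can have no further nonempty parts. Your lemma is essentially the globalization of the paper's local counting, but it buys two things: the conclusion that $\Omega$ consists of exactly two complementary sets is immediate, with no case analysis on how $N(v)$ distributes over $A$ and $B$; and it sidesteps the one slightly delicate point in the paper's argument, namely ruling out that a third part $C$ could partner with $B$ when $N(v)$ happens to lie entirely inside $B$ (which in the paper's setup requires the additional observation that $C\cup B$ misses the neighborhood of any vertex adjacent to $A$, whereas under your lemma it is excluded outright since $C\cup B\ne V(G)$). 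Your closing remarks are also sound: partition parts are nonempty by convention, and even if empty parts were tolerated they could not form a coalition, while the witnessing $2$-partition is supplied by the construction in Theorem~\ref{lower} with $\delta-k+1=1$.
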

\begin{proof}
The equality $\TC_{2}(G)=3$ is an immediate consequence of Theorems \ref{lower} and \ref{Combination}. So, we turn our attention to $k=3$. The lower bound $\TC_{3}(G)\geq2$ is clear from Theorem \ref{lower}. Let $\Omega$ be a $\TC_{3}(G)$-partition. Since $G$ is a cubic graph, it follows that for any vertex $v\in V(G)$, there exists a set $A\in \Omega$ such that $|N(v)\cap A|\leq1$. Let $B$ be any set in $\Omega$ that forms a total $3$-coalition with $A$. Particularly, we have $N(v)\subseteq A\cup B$ and $|N(v)\cap B|\geq2$. Hence, $N(v)\cap C=\emptyset$ for each $C\in \Omega\setminus \{A,B\}$. Thus, if such a set $C$ existed, it would not form a total $3$-coalition with any set in $\Omega$, therefore there is no set in $\Omega\setminus \{A,B\}$. Hence, $\TC_{3}(G)=|\Omega|=2$. 
\end{proof}

A graph $G$ with $\Delta(G)=3$ is called {\em subcubic}. If $G$ is a subcubic graph with $\delta(G) \geq 2$, then $\TC_{2}(G)\in \{2,3\}$ and both options are possible. Say, $\TC_{2}(K_{2,3})=3$, while if $G$ is obtained from the cycle $C_{5}$ by adding an edge between two nonadjacent vertices, $\TC_{2}(G)=2$.
In view of these remarks, it is natural to pose the following.

\begin{p}
Characterize subcubic graphs $G$ with $\TC_{2}(G)=2$.    
\end{p}

Note that Claim 2 of the proof of Theorem~\ref{Combination} provides the following auxiliary result, which can be of independent interest. 
\begin{proposition}
If $G$ is a $4$-regular graph, then $\TC_2(G)=4$.      
\end{proposition}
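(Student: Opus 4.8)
The plan is to establish the two bounds separately and, crucially, to avoid invoking Theorem~\ref{Combination}: although its statement specializes to $\TC_2(G)\le\max\{4,\lfloor 4/2\rfloor(4-4)+4\}=4$ for a $4$-regular graph, its \emph{proof} already uses the present statement as Claim~2, so citing it would be circular. Instead I would argue the upper bound directly, which is exactly the content of Claim~2 in the proof of Theorem~\ref{Combination}. The lower bound is immediate: since $\delta(G)=\Delta(G)=4$, Theorem~\ref{lower} with $k=2$ gives $\TC_2(G)\ge\delta(G)-k+2=4$, so the whole task reduces to proving $\TC_2(G)\le 4$. For this I would fix an arbitrary vertex $u$ and a $\TC_2(G)$-partition $\Omega=\{V_1,\ldots,V_{|\Omega|}\}$, and exploit that every coalition pair must totally $2$-dominate $u$; since $|N(u)|=4$ is fixed, controlling how these four neighbors are distributed among the parts controls which parts can be partners. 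The central tool is Lemma~\ref{L1}, which for $k=2$ and $\Delta(G)=4$ says each part has at most $3$ partners, so $|\Omega|\le 4$ follows as soon as one can force all parts to share a common partner.

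The key steps proceed by splitting on the maximum number of neighbors of $u$ inside a single part. If every part meets $N(u)$ in at most one vertex, then $N(u)$ occupies exactly four parts, and any further part $V_5$ would require a partner supplying two neighbors of $u$; this is impossible because no part holds two such neighbors and $V_5$ holds none, so $|\Omega|\le 4$. If exactly one part, say $V_1$, has $|N(u)\cap V_1|\ge 2$, then every part disjoint from $N(u)$ is forced to partner $V_1$, since only $V_1$ can contribute the two required neighbors of $u$; when $|N(u)\cap V_1|\in\{3,4\}$ this makes every part a partner of $V_1$ and Lemma~\ref{L1} yields $|\Omega|\le 4$ directly. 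The remaining ``balanced'' configurations, namely $|N(u)\cap V_1|=2$ with exactly one heavy part, and two parts each meeting $N(u)$ in exactly two vertices, cannot be closed off by this routing alone.

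These balanced configurations are the main obstacle and the genuinely combinatorial part. Here I would assume $|\Omega|\ge 5$ and derive a contradiction at a \emph{witness} vertex: choosing a heavy part $V_i$, which is not a total $2$-dominating set, produces a vertex $v$ with $|N(v)\cap V_i|\le 1$, and the coalitions incident to the relevant parts then force prescribed numbers of neighbors of $v$ into pairwise-disjoint parts, overrunning $\deg_G(v)=4$. For example, in the configuration with $|N(u)\cap V_1|=|N(u)\cap V_2|=2$ where (after renaming) $V_1$ partners $V_3,V_4$ and $V_2$ partners $V_5$, a witness $v$ with $N(v)\cap V_1=\emptyset$ forces $|N(v)\cap V_3|\ge 2$ and $|N(v)\cap V_4|\ge 2$, exhausting all four neighbors and leaving none for the coalition $V_2\cup V_5$ to $2$-dominate $v$; while $|N(v)\cap V_1|=1$ forces at least three neighbors into the disjoint parts $V_1,V_3,V_4$, again leaving too few in $V_2\cup V_5$. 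The single-heavy-part case $|N(u)\cap V_1|=2$ is handled by the same witness-counting idea using the two partners $V_4,V_5$ of $V_1$ together with the coalition $V_2\cup V_3$. Exhausting these subcases gives $|\Omega|\le 4$ in every situation, so $\TC_2(G)\le 4$, and combined with the lower bound this proves $\TC_2(G)=4$.
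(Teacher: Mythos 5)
Your proposal is correct and takes essentially the same route as the paper: the lower bound is Theorem~\ref{lower} with $k=2$, and your direct upper-bound argument --- case analysis on how $N(u)$ is distributed among the parts, Lemma~\ref{L1} to cap the number of partners, and witness-vertex degree counting at a vertex $v$ with $|N(v)\cap V_1|\le 1$ in the balanced configurations --- is precisely Claim~2 inside the proof of Theorem~\ref{Combination}, which is exactly what the paper invokes for this proposition. Your point about circularity is well taken, since the paper indeed derives the proposition from that claim rather than from the statement of Theorem~\ref{Combination}.
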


In the next result, we provide a different upper bound on $\TC_2(G)$ for graphs $G$ with sufficiently large maximum degree. If $\delta(G)\ge 6$, the bound improves that of Theorem~\ref{Combination}, yet there is no restriction on $\Delta(G)$ in Theorem~\ref{Combination}. 

\begin{theorem}\label{thm:upperbound_delta}
If $G$ is a graph with $\delta=\delta(G)\ge 2$ and $\Delta=\Delta(G)\ge 4\left\lfloor\frac{\delta}{2}\right\rfloor-2$, then $$\TC_{2}(G)\leq \left\lfloor\frac{\delta}{2}\right\rfloor(\Delta-2\left\lfloor\frac{\delta}{2}\right\rfloor+1)+\left\lceil\frac{\delta}{2}\right\rceil.$$  Moreover, the bound is sharp for every even minimum degree $\delta\ge 2$.
\end{theorem}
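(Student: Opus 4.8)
The plan is to mimic the proof of Theorem~\ref{Combination}, but to extract from the hypothesis $\Delta\ge 4\lfloor\delta/2\rfloor-2$ a uniform per-part bound that is strong enough to give the sharper estimate. Write $d=\lfloor\delta/2\rfloor$ and fix a $\TC_2(G)$-partition $\Omega$ together with a vertex $u$ of minimum degree. Split $\Omega$ into the family $\cA$ of parts meeting $N(u)$ and the family $\cB$ of parts disjoint from $N(u)$, and let $V_1,\dots,V_p$ be the parts with $|N(u)\cap V_i|\ge 2$, setting $n_i=|N(u)\cap V_i|$. As in Theorem~\ref{Combination}, if $p=0$ then no part of $\cB$ can have a total $2$-coalition partner, forcing $\cB=\emptyset$ and $\TC_2(G)\le\delta$, which lies below the claimed bound since $\Delta\ge 2d$. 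So I may assume $p\ge 1$. Because each $V\in\cB$ must partner a part in which $u$ has at least two neighbours, we get $\cB\subseteq\bigcup_{i=1}^p\Omega_i$, where $\Omega_i=\{V\in\cB : V\text{ partners }V_i\}$; and counting neighbours of $u$ yields $|\cA|=p+\bigl(\delta-\sum_{i=1}^p n_i\bigr)\le\delta-p$.

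The heart of the matter is the bound $|\Omega_i|\le\Delta-2d+1$ for every $i$. Suppose first that $V_i$ is not a total dominating set, so some vertex $w$ satisfies $N(w)\cap V_i=\emptyset$. For each $V\in\Omega_i$ the union $V\cup V_i$ totally $2$-dominates $w$, and since $w$ has no neighbour in $V_i$ it has at least two neighbours in $V$; as the parts of $\Omega_i$ are pairwise disjoint, $\Delta\ge\deg_G(w)\ge 2|\Omega_i|$, whence $|\Omega_i|\le\lfloor\Delta/2\rfloor\le\Delta-2d+1$, the last step holding because $\Delta\ge 4d-2$ gives $\lceil\Delta/2\rceil\ge 2d-1$. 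This is essentially the point at which the degree hypothesis enters, and it is what dictates its particular form.

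The main obstacle is the complementary case in which $V_i$ is a total dominating set, so that every vertex—including any witness $v_i$ with $|N(v_i)\cap V_i|=1$—has a neighbour in $V_i$, and the doubling trick only yields $|\Omega_i|\le\Delta-1$. Here I would argue across parts. If another big part $V_j$ is also a total dominating set, its witness $v_j$ has a neighbour in every $V_i$, and whenever $|N(v_j)\cap V_i|=1$ it also has a neighbour in each $V\in\Omega_i$; keeping these cross-witness neighbourhoods disjoint and summing $\deg_G(v_j)\le\Delta$ over $j$ forces the $\Omega_i$ to be small in aggregate. The parts where some witness has two neighbours in a big part I would treat by a containment argument in the spirit of Case~B of Theorem~\ref{Combination}: a large $\Omega_i$ should force $\Omega_j\subseteq\Omega_i$ for all $j$, after which $\TC_2(G)\le|\cA|+|\Omega_i|\le\delta+\Delta-3$, which again stays below the target bound because $\Delta\ge 4d-2$. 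I expect this bookkeeping—ruling out the intermediate values $\Delta-1,\dots,\Delta-2d+2$ of $|\Omega_i|$—to be the genuinely delicate part of the proof.

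With the per-part bound secured, the estimate assembles cleanly. Indeed
\[
\TC_2(G)=|\cA|+|\cB|\le(\delta-p)+\sum_{i=1}^p|\Omega_i|\le(\delta-p)+p(\Delta-2d+1),
\]
and, as a function of $p$, the right-hand side is linear with slope $\Delta-2d\ge0$, hence maximised at $p=d=\lfloor\delta/2\rfloor$, giving
\[
\TC_2(G)\le(\delta-\lfloor\tfrac{\delta}{2}\rfloor)+\lfloor\tfrac{\delta}{2}\rfloor(\Delta-2\lfloor\tfrac{\delta}{2}\rfloor+1)=\lfloor\tfrac{\delta}{2}\rfloor(\Delta-2\lfloor\tfrac{\delta}{2}\rfloor+1)+\lceil\tfrac{\delta}{2}\rceil.
\]
For sharpness when $\delta$ is even I would build, along the lines of Example~\ref{Exam}, a graph with a vertex $u$ of degree $\delta$ whose neighbourhood splits evenly among $d$ parts $V_1,\dots,V_d$ (so $n_i=2$ and $\cA=\{V_1,\dots,V_d\}$), each $V_i$ being a total dominating but not total $2$-dominating set equipped with exactly $\Delta-2d+1$ private partners in $\cB$. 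This realises $|\cA|=d=\lceil\delta/2\rceil$ and $|\cB|=d(\Delta-2d+1)$, and hence equality throughout.
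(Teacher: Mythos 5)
Your proposal correctly reproduces the outer shell of the paper's argument (the split of $\Omega$ by intersection with $N(u)$, the count $|\cA|\le\delta-p$, and the final linear optimization), but it has a genuine gap exactly where you flag it: the uniform per-part bound $|\Omega_i|\le\Delta-2\lfloor\delta/2\rfloor+1$ is only proved in the case where $V_i$ fails to be a total dominating set, and the complementary case --- a witness $v$ with $|N(v)\cap V_i|=1$, where the naive count gives only $|\Omega_i|\le\Delta-1$ --- is left as a hope (``summing over $j$ forces the $\Omega_i$ to be small in aggregate,'' ``a large $\Omega_i$ should force $\Omega_j\subseteq\Omega_i$''). This is not mere bookkeeping to be filled in: your cross-witness sketch assumes each big part $V_j$ contributes two neighbours of $v$ disjoint from the other contributions, but a big part $V_j$ need not have any partner in $\cB$ at all (it may partner inside $\cA$, or share all its $\cB$-partners with $V_i$), in which case there is nothing disjoint to charge to $v$, and the containment argument from Case~B of Theorem~\ref{Combination} does not obviously survive when several big parts have overlapping partner sets. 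Indeed it is unclear that your uniform bound with the fixed constant $\lfloor\delta/2\rfloor$ is even true when only a few big parts carry all of the $N(u)$-avoiding sets.

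The paper avoids this trap with a different device: it lets $\Psi$ be the family of parts disjoint from $N(u)$, takes $r$ \emph{minimal} such that (after renaming) $V_1,\dots,V_r$ dominate $\Psi$ in the coalition graph $\TC_2G(G,\Omega)$, and exploits minimality to extract, for each $j\in[r]$, a \emph{private} partner $V_{j'}\in\Omega_j\setminus\bigcup_{t\in[r]\setminus\{j\}}\Omega_t$. For the witness $v$ of $V_i$, each pair $(V_j,V_{j'})$ with $j\ne i$ then contributes two neighbours of $v$ disjoint from $V_i$ and from all of $\Omega_i$, yielding $|\Omega_i|+2r-1\le\Delta$, i.e.\ $|\Omega_i|\le\Delta-2r+1$ --- an $r$-dependent bound, weaker than yours per part when $r<\lfloor\delta/2\rfloor$, but valid unconditionally and with no case split on whether $V_i$ totally dominates. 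The total count then becomes $\TC_2(G)\le r(\Delta-2r+1)+\delta-r=f(r)$, and this is where the hypothesis $\Delta\ge4\lfloor\delta/2\rfloor-2$ actually enters: it makes $f$ increasing on $[1,\lfloor\delta/2\rfloor-1]$ with $f(\lfloor\delta/2\rfloor-1)\le f(\lfloor\delta/2\rfloor)$, so the worst case is $r=\lfloor\delta/2\rfloor$, which gives precisely the stated bound. (By contrast, you spend the degree hypothesis on the easy inequality $\lfloor\Delta/2\rfloor\le\Delta-2\lfloor\delta/2\rfloor+1$ and on the unproven fallback.) To repair your proof you would need to replace the uniform per-part claim by this minimal-$r$/private-partner argument, or find an independent proof of your stronger claim; as written, the central estimate is not established.
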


\begin{proof}
Since by Theorem~\ref{Combination}, the upper bound holds when $\delta(G)\le 4$, we may restrict our attention to a graph $G$ with $\delta(G)\ge 5$. 
Let $\Omega=\{V_{1},\ldots,V_{|\Omega|}\}$ be a $\TC_{2}(G)$-partition, and let $u$ be a vertex of minimum degree in $G$. If $|N(u)\cap V_i|\le 1$ for all $i\in[|\Omega|]$, then we easily see that $\TC_{2}(G)\le \delta(G)$, which is in turn bounded from above by $\left\lfloor\frac{\delta}{2}\right\rfloor(\Delta-2\left\lfloor\frac{\delta}{2}\right\rfloor+1)+\left\lceil\frac{\delta}{2}\right\rceil$, and so the statement of the theorem is proved.

Thus, we may assume that there exists an integer $s\ge 1$ such that, without loss of generality, $|N(u)\cap V_i|\ge 2$ for all $V_i\in \Omega$ with $i\in[s]$, while $|N(u)\cap V_i|\le 1$ if $i>s$. Clearly, $s\le \lfloor \delta/2\rfloor$. Let $\Psi\subsetneq\Omega$\ be the set of all $V_j$ such that $V_j\cap N(u)=\emptyset$. If $\Psi=\emptyset$, then $\TC_2(G)=|\Omega|\le s+(\delta-2s)<\delta$, which is impossible. Thus, $\Psi\ne\emptyset$. Note that every $V_j\in \Psi$ forms a total $2$-coalition with some $V_i$, where $i\in[s]$. In other words, in the graph $\TC_2G(G,\Omega)$, the vertices $V_1,\ldots, V_s$ dominate all vertices in $\Psi$. Let $r$ be the smallest number of vertices in $\{V_1,\ldots, V_s\}$ that dominate all vertices of $\Psi$. By renaming the sets if necessary, let $V_1,\ldots, V_r$ dominate all vertices in $\Psi$. Clearly, $r\in[s]$. 

For each $i\in[r]$, let $\Omega_{i}$ be the set of neighbors of $V_{i}$ in the graph $\TC_2G(G,\Omega)$ that belong to $\Psi$. By our choice of $r$, we deduce that $\Omega_{i}\nsubseteq \cup_{j\in[r]\setminus \{i\}}\Omega_{j}$. Since $V_i$ is not a total $2$-dominating set of $G$, there exists a vertex $v\in V(G)$ such that $|V_i\cap N(v)|\le 1$. Assume that $|V_i\cap N(v)|=1$. (The case when $|V_i\cap N(v)|=0$ uses similar, yet simpler arguments.) Then, all sets in $\Omega_{i}$ must have a non-empty intersection with $N(v)$. In addition, for every set $V_j\in\{V_1,\ldots,V_r\}\setminus\{V_i\}$, there exists a set $V_{j'}\in \Omega_{j}\setminus \cup_{t\in[r]\setminus \{j\}}\Omega_{t}$. Thus, there are at least two vertices in $N(v)$ that belong to $V_j\cup V_{j'}$. Altogether, we infer that $|\Omega_{i}|+2r-1\le \deg_G(v)\le \Delta$. Thus, for all $i\in[r]$, we have
\begin{equation}
\label{eq:t_i}
|\Omega_{i}|\le \Delta-2r+1.
\end{equation}

Since $\{V_1,\ldots,V_r\}$ dominates $\Psi$ in $\TC_2G(G,\Omega)$, we deduce that \begin{equation}
\label{eq:TC2}
\TC_2(G)=|\Omega|\le s+\sum_{i=1}^r{|\Omega_{i}|}+\delta-2s=\sum_{i=1}^r{|\Omega_{i}|}+\delta-s.
\end{equation}
Combining~\eqref{eq:t_i} and~\eqref{eq:TC2} we infer that
\begin{equation}
\label{eq:TC2again}
\TC_2(G)\le r(\Delta-2r+1)+\delta-s.    
\end{equation}

Note that $r\le s\le \lfloor\delta/2\rfloor$, and so the upper bound in~\eqref{eq:TC2again} is in turn bounded from above as follows:
\begin{equation}
\label{eq:TC2againagain}
r(\Delta-2r+1)+\delta-s\le r(\Delta-2r+1)+\delta-r=f(r).
\end{equation}

If $r=\lfloor\delta/2\rfloor$, then we get the desired upper bound. On the other hand, since $\Delta\geq4\lfloor\delta/2\rfloor-2$, it follows that $f$ is an increasing function on $[1,\lfloor\delta/2\rfloor-1]$. Therefore, 
\begin{align*}
\TC_{2}(G) & \leq f(r)\leq f(\left\lfloor \frac{\delta}{2}\right\rfloor-1)=f(\left\lfloor \frac{\delta}{2}\right\rfloor)+4\left\lfloor \frac{\delta}{2}\right\rfloor-2-\Delta\leq f(\left\lfloor \frac{\delta}{2}\right\rfloor) \\
& = \left\lfloor\frac{\delta}{2}\right\rfloor(\Delta-2\left\lfloor\frac{\delta}{2}\right\rfloor+1)+\left\lceil\frac{\delta}{2}\right\rceil\,,
\end{align*}
as desired. 

The sharpness of this upper bound is illustrated in Example \ref{Exam}. 
\end{proof}

\begin{exa}(Sharpness of the bounds in Theorems \ref{Combination} and \ref{thm:upperbound_delta})\label{Exam}
To see that the upper bounds given in Theorems \ref{Combination} and \ref{thm:upperbound_delta} are sharp, we introduce the graphs $G(d,\ell)$, where $d\ge 2$ and $\ell\geq2d-1$ are integers, as follows. For each $i\in [d]$, consider the set of vertices $$A_i=\{x_{i,j}:\, j\in [\ell]\},$$ 
and join a new vertex $y_i$ to each $x_{i,j}$ so that $A_i\cup\{y_i\}$ induces a star $K_{1,\ell}$. For all $i\in [d-1]$, add to the graph the edge $x_{i,1}x_{i+1,1}$ if $i$ is odd, and the edge $x_{i,2}x_{i+1,2}$ if $i$ is even. Next, for each $i\in [d]$ and $j\in [\ell]$, take a copy of the complete tripartite graph $K_{d,d,d}$, and denote it by $B_{i,j}$. Choose $S_{i,j}\subset V(B_{i,j})$ with $|S_{i,j}|=2d-1$ such that $B_{i,j}-S_{i,j}$ contains an independent set with $d$ vertices. For all $i\in [d]$ and $j\in [\ell]$, join $x_{i,j}$ to the vertices in $S_{i,j}$.   
Similarly, for each $i\in [d]$ take a copy of $K_{d,d,d}$, denote it by $B_i$, and choose $S_i\subset V(B_i)$ with $|S_i|=2d-1$ such that $B_{i}-S_{i}$ contains an independent set with $d$ vertices. For all $i\in [d]$, join $y_i$ to the vertices in $S_i$. The resulting graph is connected, and we denote it by $G(d,\ell)$. The graph $G(5,\ell)$, for $\ell\geq9$, is depicted in Fig.~\ref{Fig1}.

\begin{figure}[ht!]
\centering
\begin{tikzpicture}[scale=0.28, transform shape]
\node [draw, shape=circle, fill=black] (a11) at (-2.5,3) {};
\node [draw, shape=circle, fill=black] (a12) at (-2.5,4) {};
\node [draw, shape=circle, fill=black] (a13) at (-2.5,5) {};
\node [draw, shape=circle, fill=black] (a14) at (-2.5,6) {};
\node [draw, shape=circle, fill=black] (a15) at (-2.5,7) {};
\draw[dashed] (-2.5,5) ellipse (0.5cm and 3cm);

\node [draw, shape=circle, fill=black] (b11) at (-4.5,3) {};
\node [draw, shape=circle, fill=black] (b12) at (-4.5,4) {};
\node [draw, shape=circle, fill=black] (b13) at (-4.5,5) {};
\node [draw, shape=circle, fill=black] (b14) at (-4.5,6) {};
\node [draw, shape=circle, fill=black] (b15) at (-4.5,7) {};
\draw[dashed] (-4.5,5) ellipse (0.5cm and 3cm);

\node [draw, shape=circle, fill=black] (c11) at (-6.5,3) {};
\node [draw, shape=circle, fill=black] (c12) at (-6.5,4) {};
\node [draw, shape=circle, fill=black] (c13) at (-6.5,5) {};
\node [draw, shape=circle, fill=black] (c14) at (-6.5,6) {};
\node [draw, shape=circle, fill=black] (c15) at (-6.5,7) {};
\draw[dashed] (-6.5,5) ellipse (0.5cm and 3cm);

\draw[line width=1mm] (-2.77,7.5) -- (-4.23,7.5);
\draw[line width=1mm] (-4.77,7.5) -- (-6.23,7.5);
\draw[line width=1mm] (-2.5,8) .. controls (-2.5,9) and (-6.5,9) .. (-6.5,8);

%%%%%%%%%%%%%%%%%%%%%%%%%%%%%%%%%%%%%%%%%%%%%%%%%%%%%%%%%%%%%%%%%%%%%%%

\node [draw, shape=circle, fill=black] (g11) at (-13.5,3) {};
\node [draw, shape=circle, fill=black] (g12) at (-13.5,4) {};
\node [draw, shape=circle, fill=black] (g13) at (-13.5,5) {};
\node [draw, shape=circle, fill=black] (g14) at (-13.5,6) {};
\node [draw, shape=circle, fill=black] (g15) at (-13.5,7) {};
\draw[dashed] (-13.5,5) ellipse (0.5cm and 3cm);

\node [draw, shape=circle, fill=black] (h11) at (-15.5,3) {};
\node [draw, shape=circle, fill=black] (h12) at (-15.5,4) {};
\node [draw, shape=circle, fill=black] (h13) at (-15.5,5) {};
\node [draw, shape=circle, fill=black] (h14) at (-15.5,6) {};
\node [draw, shape=circle, fill=black] (h15) at (-15.5,7) {};
\draw[dashed] (-15.5,5) ellipse (0.5cm and 3cm);

\node [draw, shape=circle, fill=black] (i11) at (-17.5,3) {};
\node [draw, shape=circle, fill=black] (i12) at (-17.5,4) {};
\node [draw, shape=circle, fill=black] (i13) at (-17.5,5) {};
\node [draw, shape=circle, fill=black] (i14) at (-17.5,6) {};
\node [draw, shape=circle, fill=black] (i15) at (-17.5,7) {};
\draw[dashed] (-17.5,5) ellipse (0.5cm and 3cm);

\draw[line width=1mm] (-13.77,7.5) -- (-15.23,7.5);
\draw[line width=1mm] (-15.77,7.5) -- (-17.23,7.5);
\draw[line width=1mm] (-13.5,8) .. controls (-13.5,9) and (-17.5,9) .. (-17.5,8);

%%%%%%%%%%%%%%%%%%%%%%%%%%%%%%%%%%%%%%%%%%%%%%%%%%%%%%%%%%%%%%%%%%%%%%%

\node [draw, shape=circle, fill=black] (a) at (-4.5,0) {};
\draw (a12)--(a)--(a11);
\draw (a13)--(a)--(a14);
\draw (a)--(a15);
\draw (c12)--(a)--(c11);
\draw (c13)--(a)--(c14);

%%%%%%%%%%%%%%%%%%%%%%%%%%%%%%%%%%%%%%%%%%%%%%%%%%%%%%%%%%%%%%%%%%%%%%%%

\node [draw, shape=circle, fill=black] (c) at (-15.5,0) {};
\draw (g12)--(c)--(g11);
\draw (g13)--(c)--(g14);
\draw (c)--(g15);
\draw (i12)--(c)--(i11);
\draw (i13)--(c)--(i14);

%%%%%%%%%%%%%%%%%%%%%%%%%%%%%%%%%%%%%%%%%%%%%%%%%%%%%%%%%%%%%%%%%%%%%%%%

\node [draw, shape=circle, fill=black] (d) at (-10,-3) {};
\draw (a)--(d);
\draw (c)--(d);

%%%%%%%%%%%%%%%%%%%%%%%%%%%%%%%%%%%%%%%%%%%%%%%%%%%%%%%%%%%%%%%%%%%%%%%%

\node [draw, shape=circle, fill=black] (j11) at (-8,-11) {};
\node [draw, shape=circle, fill=black] (j12) at (-8,-10) {};
\node [draw, shape=circle, fill=black] (j13) at (-8,-9) {};
\node [draw, shape=circle, fill=black] (j14) at (-8,-8) {};
\node [draw, shape=circle, fill=black] (j15) at (-8,-7) {};
\draw[dashed] (-8,-9) ellipse (0.5cm and 3cm);

\node [draw, shape=circle, fill=black] (k11) at (-10,-11) {};
\node [draw, shape=circle, fill=black] (k12) at (-10,-10) {};
\node [draw, shape=circle, fill=black] (k13) at (-10,-9) {};
\node [draw, shape=circle, fill=black] (k14) at (-10,-8) {};
\node [draw, shape=circle, fill=black] (k15) at (-10,-7) {};
\draw[dashed] (-10,-9) ellipse (0.5cm and 3cm);

\node [draw, shape=circle, fill=black] (l11) at (-12,-11) {};
\node [draw, shape=circle, fill=black] (l12) at (-12,-10) {};
\node [draw, shape=circle, fill=black] (l13) at (-12,-9) {};
\node [draw, shape=circle, fill=black] (l14) at (-12,-8) {};
\node [draw, shape=circle, fill=black] (l15) at (-12,-7) {};
\draw[dashed] (-12,-9) ellipse (0.5cm and 3cm);

\draw[line width=1mm] (-8.27,-11.5) -- (-9.73,-11.5);
\draw[line width=1mm] (-10.27,-11.5) -- (-11.73,-11.5);
\draw[line width=1mm] (-8,-12) .. controls (-8,-13) and (-12,-13) .. (-12,-12);

\draw (j12)--(d)--(j11);
\draw (j13)--(d)--(j14);
\draw (d)--(j15);
\draw (l12)--(d)--(l13);
\draw (l14)--(d)--(l15);

%%%%%%%%%%%%%%%%%%%%%%%%%%%%%%%%%%%%%%%%%%%%%%%%%%%%%%%%%%%%%%%%%%%%%%

\node [draw, shape=circle, fill=black] (m11) at (23.5,3) {};
\node [draw, shape=circle, fill=black] (m12) at (23.5,4) {};
\node [draw, shape=circle, fill=black] (m13) at (23.5,5) {};
\node [draw, shape=circle, fill=black] (m14) at (23.5,6) {};
\node [draw, shape=circle, fill=black] (m15) at (23.5,7) {};
\draw[dashed] (23.5,5) ellipse (0.5cm and 3cm);

\node [draw, shape=circle, fill=black] (n11) at (21.5,3) {};
\node [draw, shape=circle, fill=black] (n12) at (21.5,4) {};
\node [draw, shape=circle, fill=black] (n13) at (21.5,5) {};
\node [draw, shape=circle, fill=black] (n14) at (21.5,6) {};
\node [draw, shape=circle, fill=black] (n15) at (21.5,7) {};
\draw[dashed] (21.5,5) ellipse (0.5cm and 3cm);

\node [draw, shape=circle, fill=black] (o11) at (19.5,3) {};
\node [draw, shape=circle, fill=black] (o12) at (19.5,4) {};
\node [draw, shape=circle, fill=black] (o13) at (19.5,5) {};
\node [draw, shape=circle, fill=black] (o14) at (19.5,6) {};
\node [draw, shape=circle, fill=black] (o15) at (19.5,7) {};
\draw[dashed] (19.5,5) ellipse (0.5cm and 3cm);

\draw[line width=1mm] (23.23,7.5) -- (21.77,7.5);
\draw[line width=1mm] (21.23,7.5) -- (19.77,7.5);
\draw[line width=1mm] (23.5,8) .. controls (23.5,9) and (19.5,9) .. (19.5,8);
 
%%%%%%%%%%%%%%%%%%%%%%%%%%%%%%%%%%%%%%%%%%%%%%%%%%%%%%%%%%%%%%%%%%%%%%%

\node [draw, shape=circle, fill=black] (s11) at (12.5,3) {};
\node [draw, shape=circle, fill=black] (s12) at (12.5,4) {};
\node [draw, shape=circle, fill=black] (s13) at (12.5,5) {};
\node [draw, shape=circle, fill=black] (s14) at (12.5,6) {};
\node [draw, shape=circle, fill=black] (s15) at (12.5,7) {};
\draw[dashed] (12.5,5) ellipse (0.5cm and 3cm);

\node [draw, shape=circle, fill=black] (t11) at (10.5,3) {};
\node [draw, shape=circle, fill=black] (t12) at (10.5,4) {};
\node [draw, shape=circle, fill=black] (t13) at (10.5,5) {};
\node [draw, shape=circle, fill=black] (t14) at (10.5,6) {};
\node [draw, shape=circle, fill=black] (t15) at (10.5,7) {};
\draw[dashed] (10.5,5) ellipse (0.5cm and 3cm);

\node [draw, shape=circle, fill=black] (u11) at (8.5,3) {};
\node [draw, shape=circle, fill=black] (u12) at (8.5,4) {};
\node [draw, shape=circle, fill=black] (u13) at (8.5,5) {};
\node [draw, shape=circle, fill=black] (u14) at (8.5,6) {};
\node [draw, shape=circle, fill=black] (u15) at (8.5,7) {};
\draw[dashed] (8.5,5) ellipse (0.5cm and 3cm);

\draw[line width=1mm] (12.23,7.5) -- (10.77,7.5);
\draw[line width=1mm] (10.23,7.5) -- (8.77,7.5);
\draw[line width=1mm] (12.5,8) .. controls (12.5,9) and (8.5,9) .. (8.5,8);

%%%%%%%%%%%%%%%%%%%%%%%%%%%%%%%%%%%%%%%%%%%%%%%%%%%%%%%%%%%%%%%%%%%%%%%

\node [draw, shape=circle, fill=black] (e) at (21.5,0) {};
\draw (m12)--(e)--(m11);
\draw (m13)--(e)--(m14);
\draw (e)--(m15);
\draw (o12)--(e)--(o11);
\draw (o13)--(e)--(o14);

%%%%%%%%%%%%%%%%%%%%%%%%%%%%%%%%%%%%%%%%%%%%%%%%%%%%%%%%%%%%%%%%%%%%%%%%

\node [draw, shape=circle, fill=black] (g) at (10.5,0) {};
\draw (s12)--(g)--(s11);
\draw (s13)--(g)--(s14);
\draw (g)--(s15);
\draw (u12)--(g)--(u11);
\draw (u13)--(g)--(u14);

%%%%%%%%%%%%%%%%%%%%%%%%%%%%%%%%%%%%%%%%%%%%%%%%%%%%%%%%%%%%%%%%%%%%%%%%

\node [draw, shape=circle, fill=black] (h) at (16,-3) {};
\draw (e)--(h);
\draw (g)--(h);

%%%%%%%%%%%%%%%%%%%%%%%%%%%%%%%%%%%%%%%%%%%%%%%%%%%%%%%%%%%%%%%%%%%%%%%%

\node [draw, shape=circle, fill=black] (v11) at (18,-11) {};
\node [draw, shape=circle, fill=black] (v12) at (18,-10) {};
\node [draw, shape=circle, fill=black] (v13) at (18,-9) {};
\node [draw, shape=circle, fill=black] (v14) at (18,-8) {};
\node [draw, shape=circle, fill=black] (v15) at (18,-7) {};
\draw[dashed] (18,-9) ellipse (0.5cm and 3cm);

\node [draw, shape=circle, fill=black] (w11) at (16,-11) {};
\node [draw, shape=circle, fill=black] (w12) at (16,-10) {};
\node [draw, shape=circle, fill=black] (w13) at (16,-9) {};
\node [draw, shape=circle, fill=black] (w14) at (16,-8) {};
\node [draw, shape=circle, fill=black] (w15) at (16,-7) {};
\draw[dashed] (16,-9) ellipse (0.5cm and 3cm);

\node [draw, shape=circle, fill=black] (x11) at (14,-11) {};
\node [draw, shape=circle, fill=black] (x12) at (14,-10) {};
\node [draw, shape=circle, fill=black] (x13) at (14,-9) {};
\node [draw, shape=circle, fill=black] (x14) at (14,-8) {};
\node [draw, shape=circle, fill=black] (x15) at (14,-7) {};
\draw[dashed] (14,-9) ellipse (0.5cm and 3cm);

\draw (v11)--(h)--(v12);
\draw (v13)--(h)--(v14);
\draw (h)--(v15);

\draw (x12)--(h)--(x13);
\draw (x14)--(h)--(x15);

\draw[line width=1mm] (17.71,-11.5) -- (16.3,-11.5);
\draw[line width=1mm] (15.71,-11.5) -- (14.3,-11.5);
\draw[line width=1mm] (18,-12) .. controls (18,-13) and (14,-13) .. (14,-12);

%%%%%%%%%%%%%%%%%%%%%%%%%%%%%%%%%%%%%%%%%%%%%%%%%%%%%%%%%%%%%%%%%%%%%%

\node [draw, shape=circle, fill=black] (y11) at (-2.5,-42) {};
\node [draw, shape=circle, fill=black] (y12) at (-2.5,-41) {};
\node [draw, shape=circle, fill=black] (y13) at (-2.5,-40) {};
\node [draw, shape=circle, fill=black] (y14) at (-2.5,-39) {};
\node [draw, shape=circle, fill=black] (y15) at (-2.5,-38) {};
\draw[dashed] (-2.5,-40) ellipse (0.5cm and 3cm);

\node [draw, shape=circle, fill=black] (z11) at (-4.5,-42) {};
\node [draw, shape=circle, fill=black] (z12) at (-4.5,-41) {};
\node [draw, shape=circle, fill=black] (z13) at (-4.5,-40) {};
\node [draw, shape=circle, fill=black] (z14) at (-4.5,-39) {};
\node [draw, shape=circle, fill=black] (z15) at (-4.5,-38) {};
\draw[dashed] (-4.5,-40) ellipse (0.5cm and 3cm);

\node [draw, shape=circle, fill=black] (a'11) at (-6.5,-42) {};
\node [draw, shape=circle, fill=black] (a'12) at (-6.5,-41) {};
\node [draw, shape=circle, fill=black] (a'13) at (-6.5,-40) {};
\node [draw, shape=circle, fill=black] (a'14) at (-6.5,-39) {};
\node [draw, shape=circle, fill=black] (a'15) at (-6.5,-38) {};
\draw[dashed] (-6.5,-40) ellipse (0.5cm and 3cm);

\draw[line width=1mm] (-2.77,-42.5) -- (-4.23,-42.5);
\draw[line width=1mm] (-4.77,-42.5) -- (-6.23,-42.5);
\draw[line width=1mm] (-2.5,-43) .. controls (-2.5,-44) and (-6.5,-44) .. (-6.5,-43);

%%%%%%%%%%%%%%%%%%%%%%%%%%%%%%%%%%%%%%%%%%%%%%%%%%%%%%%%%%%%%%%%%%%%%%%

\node [draw, shape=circle, fill=black] (e'11) at (-13.5,-42) {};
\node [draw, shape=circle, fill=black] (e'12) at (-13.5,-41) {};
\node [draw, shape=circle, fill=black] (e'13) at (-13.5,-40) {};
\node [draw, shape=circle, fill=black] (e'14) at (-13.5,-39) {};
\node [draw, shape=circle, fill=black] (e'15) at (-13.5,-38) {};
\draw[dashed] (-13.5,-40) ellipse (0.5cm and 3cm);

\node [draw, shape=circle, fill=black] (f'11) at (-15.5,-42) {};
\node [draw, shape=circle, fill=black] (f'12) at (-15.5,-41) {};
\node [draw, shape=circle, fill=black] (f'13) at (-15.5,-40) {};
\node [draw, shape=circle, fill=black] (f'14) at (-15.5,-39) {};
\node [draw, shape=circle, fill=black] (f'15) at (-15.5,-38) {};
\draw[dashed] (-15.5,-40) ellipse (0.5cm and 3cm);

\node [draw, shape=circle, fill=black] (g'11) at (-17.5,-42) {};
\node [draw, shape=circle, fill=black] (g'12) at (-17.5,-41) {};
\node [draw, shape=circle, fill=black] (g'13) at (-17.5,-40) {};
\node [draw, shape=circle, fill=black] (g'14) at (-17.5,-39) {};
\node [draw, shape=circle, fill=black] (g'15) at (-17.5,-38) {};
\draw[dashed] (-17.5,-40) ellipse (0.5cm and 3cm);

\draw[line width=1mm] (-13.77,-42.5) -- (-15.23,-42.5);
\draw[line width=1mm] (-15.77,-42.5) -- (-17.23,-42.5);
\draw[line width=1mm] (-13.5,-43) .. controls (-13.5,-44) and (-17.5,-44) .. (-17.5,-43);

%%%%%%%%%%%%%%%%%%%%%%%%%%%%%%%%%%%%%%%%%%%%%%%%%%%%%%%%%%%%%%%%%%%%%%%

\node [draw, shape=circle, fill=black] (h'11) at (-8,-28) {};
\node [draw, shape=circle, fill=black] (h'12) at (-8,-27) {};
\node [draw, shape=circle, fill=black] (h'13) at (-8,-26) {};
\node [draw, shape=circle, fill=black] (h'14) at (-8,-25) {};
\node [draw, shape=circle, fill=black] (h'15) at (-8,-24) {};
\draw[dashed] (-8,-26) ellipse (0.5cm and 3cm);

\node [draw, shape=circle, fill=black] (i'11) at (-10,-28) {};
\node [draw, shape=circle, fill=black] (i'12) at (-10,-27) {};
\node [draw, shape=circle, fill=black] (i'13) at (-10,-26) {};
\node [draw, shape=circle, fill=black] (i'14) at (-10,-25) {};
\node [draw, shape=circle, fill=black] (i'15) at (-10,-24) {};
\draw[dashed] (-10,-26) ellipse (0.5cm and 3cm);

\node [draw, shape=circle, fill=black] (j'11) at (-12,-28) {};
\node [draw, shape=circle, fill=black] (j'12) at (-12,-27) {};
\node [draw, shape=circle, fill=black] (j'13) at (-12,-26) {};
\node [draw, shape=circle, fill=black] (j'14) at (-12,-25) {};
\node [draw, shape=circle, fill=black] (j'15) at (-12,-24) {};
\draw[dashed] (-12,-26) ellipse (0.5cm and 3cm);

\draw[line width=1mm] (-8.27,-23.5) -- (-9.73,-23.5);
\draw[line width=1mm] (-10.27,-23.5) -- (-11.73,-23.5);
\draw[line width=1mm] (-8,-23) .. controls (-8,-22) and (-12,-22) .. (-12,-23);

\node [draw, shape=circle, fill=black] (i) at (-15.5,-35) {};
\node [draw, shape=circle, fill=black] (k) at (-4.5,-35) {};

\draw (y12)--(k)--(y11);
\draw (y13)--(k)--(y14);
\draw (k)--(y15);
\draw (a'12)--(k)--(a'13);
\draw (a'14)--(k)--(a'15);

\draw (e'12)--(i)--(e'11);
\draw (e'13)--(i)--(e'14);
\draw (i)--(e'15);
\draw (g'12)--(i)--(g'13);
\draw (g'14)--(i)--(g'15);

\node [draw, shape=circle, fill=black] (l) at (-10,-32) {};
\draw (k)--(l);
\draw (i)--(l);

\draw (h'12)--(l)--(h'11);
\draw (h'13)--(l)--(h'14);
\draw (l)--(h'15);
\draw (j'11)--(l)--(j'12);
\draw (j'13)--(l)--(j'14);

%%%%%%%%%%%%%%%%%%%%%%%%%%%%%%%%%%%%%%%%%%%%%%%%%%%%%%%%%%%%%%%%%%%%%%%

\node [draw, shape=circle, fill=black] (h'11) at (18,-28) {};
\node [draw, shape=circle, fill=black] (h'12) at (18,-27) {};
\node [draw, shape=circle, fill=black] (h'13) at (18,-26) {};
\node [draw, shape=circle, fill=black] (h'14) at (18,-25) {};
\node [draw, shape=circle, fill=black] (h'15) at (18,-24) {};
\draw[dashed] (18,-26) ellipse (0.5cm and 3cm);

\node [draw, shape=circle, fill=black] (i'11) at (16,-28) {};
\node [draw, shape=circle, fill=black] (i'12) at (16,-27) {};
\node [draw, shape=circle, fill=black] (i'13) at (16,-26) {};
\node [draw, shape=circle, fill=black] (i'14) at (16,-25) {};
\node [draw, shape=circle, fill=black] (i'15) at (16,-24) {};
\draw[dashed] (16,-26) ellipse (0.5cm and 3cm);

\node [draw, shape=circle, fill=black] (j'11) at (14,-28) {};
\node [draw, shape=circle, fill=black] (j'12) at (14,-27) {};
\node [draw, shape=circle, fill=black] (j'13) at (14,-26) {};
\node [draw, shape=circle, fill=black] (j'14) at (14,-25) {};
\node [draw, shape=circle, fill=black] (j'15) at (14,-24) {};
\draw[dashed] (14,-26) ellipse (0.5cm and 3cm);

\draw[line width=1mm] (17.73,-23.5) -- (16.27,-23.5);
\draw[line width=1mm] (15.73,-23.5) -- (14.27,-23.5);
\draw[line width=1mm] (18,-23) .. controls (18,-22) and (14,-22) .. (14,-23);

\node [draw, shape=circle, fill=black] (m) at (10.5,-35) {};
\node [draw, shape=circle, fill=black] (o) at (21.5,-35) {};

%%%%%%%%%%%%%%%%%%%%%%%%%%%%%%%%%%%%%%%%%%%%%%%%%%%%%%%%%%%%%%%%%%%%%

\node [draw, shape=circle, fill=black] (k'11) at (12.5,-42) {};
\node [draw, shape=circle, fill=black] (k'12) at (12.5,-41) {};
\node [draw, shape=circle, fill=black] (k'13) at (12.5,-40) {};
\node [draw, shape=circle, fill=black] (k'14) at (12.5,-39) {};
\node [draw, shape=circle, fill=black] (k'15) at (12.5,-38) {};
\draw[dashed] (12.5,-40) ellipse (0.5cm and 3cm);

\node [draw, shape=circle, fill=black] (l'11) at (10.5,-42) {};
\node [draw, shape=circle, fill=black] (l'12) at (10.5,-41) {};
\node [draw, shape=circle, fill=black] (l'13) at (10.5,-40) {};
\node [draw, shape=circle, fill=black] (l'14) at (10.5,-39) {};
\node [draw, shape=circle, fill=black] (l'15) at (10.5,-38) {};
\draw[dashed] (10.5,-40) ellipse (0.5cm and 3cm);

\node [draw, shape=circle, fill=black] (m'11) at (8.5,-42) {};
\node [draw, shape=circle, fill=black] (m'12) at (8.5,-41) {};
\node [draw, shape=circle, fill=black] (m'13) at (8.5,-40) {};
\node [draw, shape=circle, fill=black] (m'14) at (8.5,-39) {};
\node [draw, shape=circle, fill=black] (m'15) at (8.5,-38) {};
\draw[dashed] (8.5,-40) ellipse (0.5cm and 3cm);

\draw[line width=1mm] (12.23,-42.5) -- (10.77,-42.5);
\draw[line width=1mm] (10.23,-42.5) -- (8.77,-42.5);
\draw[line width=1mm] (12.5,-43) .. controls (12.5,-44) and (8.5,-44) .. (8.5,-43);

%%%%%%%%%%%%%%%%%%%%%%%%%%%%%%%%%%%%%%%%%%%%%%%%%%%%%%%%%%%%%%%%%%%%

\node [draw, shape=circle, fill=black] (n'11) at (23.5,-42) {};
\node [draw, shape=circle, fill=black] (n'12) at (23.5,-41) {};
\node [draw, shape=circle, fill=black] (n'13) at (23.5,-40) {};
\node [draw, shape=circle, fill=black] (n'14) at (23.5,-39) {};
\node [draw, shape=circle, fill=black] (n'15) at (23.5,-38) {};
\draw[dashed] (23.5,-40) ellipse (0.5cm and 3cm);

\node [draw, shape=circle, fill=black] (o'11) at (21.5,-42) {};
\node [draw, shape=circle, fill=black] (o'12) at (21.5,-41) {};
\node [draw, shape=circle, fill=black] (o'13) at (21.5,-40) {};
\node [draw, shape=circle, fill=black] (o'14) at (21.5,-39) {};
\node [draw, shape=circle, fill=black] (o'15) at (21.5,-38) {};
\draw[dashed] (21.5,-40) ellipse (0.5cm and 3cm);

\node [draw, shape=circle, fill=black] (p'11) at (19.5,-42) {};
\node [draw, shape=circle, fill=black] (p'12) at (19.5,-41) {};
\node [draw, shape=circle, fill=black] (p'13) at (19.5,-40) {};
\node [draw, shape=circle, fill=black] (p'14) at (19.5,-39) {};
\node [draw, shape=circle, fill=black] (p'15) at (19.5,-38) {};
\draw[dashed] (19.5,-40) ellipse (0.5cm and 3cm);

\draw[line width=1mm] (23.23,-42.5) -- (21.77,-42.5);
\draw[line width=1mm] (21.23,-42.5) -- (19.77,-42.5);
\draw[line width=1mm] (23.5,-43) .. controls (23.5,-44) and (19.5,-44) .. (19.5,-43);

\node [draw, shape=circle, fill=black] (p) at (16,-32) {};
\draw (m)--(p);
\draw (o)--(p);

\draw (h'12)--(p)--(h'11);
\draw (h'13)--(p)--(h'14);
\draw (p)--(h'15);
\draw (j'11)--(p)--(j'12);
\draw (j'13)--(p)--(j'14);

\draw (k'12)--(m)--(k'11);
\draw (k'13)--(m)--(k'14);
\draw (m)--(k'15);
\draw (m'12)--(m)--(m'13);
\draw (m'14)--(m)--(m'15);

\draw (n'12)--(o)--(n'11);
\draw (n'13)--(o)--(n'14);
\draw (o)--(n'15);
\draw (p'12)--(o)--(p'13);
\draw (p'14)--(o)--(p'15);

%%%%%%%%%%%%%%%%%%%%%%%%%%%%%%%%%%%%%%%%%%%%%%%%%%%%%%%%%%%%%%%%%%%%%%

\draw (a)--(g);
\draw (k)--(m);
\draw (e)--(o);

%%%%%%%%%%%%%%%%%%%%%%%%%%%%%%%%%%%%%%%%%%%%%%%%%%%%%%%%%%%%%%%%%%%%%%

\node [draw, shape=circle, fill=black] (q'11) at (-0.5,-15) {};
\node [draw, shape=circle, fill=black] (q'12) at (-0.5,-14) {};
\node [draw, shape=circle, fill=black] (q'13) at (-0.5,-13) {};
\node [draw, shape=circle, fill=black] (q'14) at (-0.5,-12) {};
\node [draw, shape=circle, fill=black] (q'15) at (-0.5,-11) {};
\draw[dashed] (-0.5,-13) ellipse (0.5cm and 3cm);

\node [draw, shape=circle, fill=black] (r'11) at (-2.5,-15) {};
\node [draw, shape=circle, fill=black] (r'12) at (-2.5,-14) {};
\node [draw, shape=circle, fill=black] (r'13) at (-2.5,-13) {};
\node [draw, shape=circle, fill=black] (r'14) at (-2.5,-12) {};
\node [draw, shape=circle, fill=black] (r'15) at (-2.5,-11) {};
\draw[dashed] (-2.5,-13) ellipse (0.5cm and 3cm);

\node [draw, shape=circle, fill=black] (s'11) at (-4.5,-15) {};
\node [draw, shape=circle, fill=black] (s'12) at (-4.5,-14) {};
\node [draw, shape=circle, fill=black] (s'13) at (-4.5,-13) {};
\node [draw, shape=circle, fill=black] (s'14) at (-4.5,-12) {};
\node [draw, shape=circle, fill=black] (s'15) at (-4.5,-11) {};
\draw[dashed] (-4.5,-13) ellipse (0.5cm and 3cm);

\draw[line width=1mm] (-0.77,-10.5) -- (-2.23,-10.5);
\draw[line width=1mm] (-2.77,-10.5) -- (-4.23,-10.5);
\draw[line width=1mm] (-0.5,-10) .. controls (-0.5,-9) and (-4.5,-9) .. (-4.5,-10);

%%%%%%%%%%%%%%%%%%%%%%%%%%%%%%%%%%%%%%%%%%%%%%%%%%%%%%%%%%%%%%%%%%%%%%%

\node [draw, shape=circle, fill=black] (111) at (10.5,-15) {};
\node [draw, shape=circle, fill=black] (112) at (10.5,-14) {};
\node [draw, shape=circle, fill=black] (113) at (10.5,-13) {};
\node [draw, shape=circle, fill=black] (114) at (10.5,-12) {};
\node [draw, shape=circle, fill=black] (115) at (10.5,-11) {};
\draw[dashed] (10.5,-13) ellipse (0.5cm and 3cm);

\node [draw, shape=circle, fill=black] (211) at (8.5,-15) {};
\node [draw, shape=circle, fill=black] (212) at (8.5,-14) {};
\node [draw, shape=circle, fill=black] (213) at (8.5,-13) {};
\node [draw, shape=circle, fill=black] (214) at (8.5,-12) {};
\node [draw, shape=circle, fill=black] (215) at (8.5,-11) {};
\draw[dashed] (8.5,-13) ellipse (0.5cm and 3cm);

\node [draw, shape=circle, fill=black] (311) at (6.5,-15) {};
\node [draw, shape=circle, fill=black] (312) at (6.5,-14) {};
\node [draw, shape=circle, fill=black] (313) at (6.5,-13) {};
\node [draw, shape=circle, fill=black] (314) at (6.5,-12) {};
\node [draw, shape=circle, fill=black] (315) at (6.5,-11) {};
\draw[dashed] (6.5,-13) ellipse (0.5cm and 3cm);

\draw[line width=1mm] (10.23,-10.5) -- (8.77,-10.5);
\draw[line width=1mm] (8.23,-10.5) -- (6.77,-10.5);
\draw[line width=1mm] (10.5,-10) .. controls (10.5,-9) and (6.5,-9) .. (6.5,-10);

%%%%%%%%%%%%%%%%%%%%%%%%%%%%%%%%%%%%%%%%%%%%%%%%%%%%%%%%%%%%%%%%%%%%%%%

\node [draw, shape=circle, fill=black] (x'11) at (5,-30) {};
\node [draw, shape=circle, fill=black] (x'12) at (5,-29) {};
\node [draw, shape=circle, fill=black] (x'13) at (5,-28) {};
\node [draw, shape=circle, fill=black] (x'14) at (5,-27) {};
\node [draw, shape=circle, fill=black] (x'15) at (5,-26) {};
\draw[dashed] (5,-28) ellipse (0.5cm and 3cm);

\node [draw, shape=circle, fill=black] (y'11) at (3,-30) {};
\node [draw, shape=circle, fill=black] (y'12) at (3,-29) {};
\node [draw, shape=circle, fill=black] (y'13) at (3,-28) {};
\node [draw, shape=circle, fill=black] (y'14) at (3,-27) {};
\node [draw, shape=circle, fill=black] (y'15) at (3,-26) {};
\draw[dashed] (3,-28) ellipse (0.5cm and 3cm);

\node [draw, shape=circle, fill=black] (z'11) at (1,-30) {};
\node [draw, shape=circle, fill=black] (z'12) at (1,-29) {};
\node [draw, shape=circle, fill=black] (z'13) at (1,-28) {};
\node [draw, shape=circle, fill=black] (z'14) at (1,-27) {};
\node [draw, shape=circle, fill=black] (z'15) at (1,-26) {};
\draw[dashed] (1,-28) ellipse (0.5cm and 3cm);

\draw[line width=1mm] (4.73,-30.5) -- (3.27,-30.5);
\draw[line width=1mm] (2.73,-30.5) -- (1.27,-30.5);
\draw[line width=1mm] (5,-31) .. controls (5,-32) and (1,-32) .. (1,-31);

%%%%%%%%%%%%%%%%%%%%%%%%%%%%%%%%%%%%%%%%%%%%%%%%%%%%%%%%%%%%%%%%%%%%

\node [draw, shape=circle, fill=black] (q) at (3,-22) {};

\draw (x'12)--(q)--(x'11);
\draw (x'13)--(q)--(x'14);
\draw (q)--(x'15);
\draw (z'12)--(q)--(z'13);
\draw (z'14)--(q)--(z'15);

\node [draw, shape=circle, fill=black] (r) at (-2.5,-19) {};
\node [draw, shape=circle, fill=black] (t) at (8.5,-19) {};

\draw (s'12)--(r)--(s'11);
\draw (s'13)--(r)--(s'14);
\draw (r)--(s'15);
\draw (q'11)--(r)--(q'12);
\draw (q'13)--(r)--(q'14);

\draw (312)--(t)--(311);
\draw (313)--(t)--(314);
\draw (t)--(315);
\draw (111)--(t)--(112);
\draw (113)--(t)--(114);

\draw (r)--(q);
\draw (t)--(q);
\draw (i) .. controls (-19,-19) and (-10,-19) .. (r);

%%%%%%%%%%%%%%%%%%%%%%%%%%%%%%%%%%%%%%%%%%%%%%%%%%%%%%%%%%%%%%%%%%

\node [scale=1.8] at (-10.8,-3.2) {\LARGE $y_{1}$};
\node [scale=1.8] at (-4.2,-0.8) {\LARGE $x_{1,1}$};
\node [scale=1.8] at (-16,-0.6) {\LARGE $x_{1,\ell}$};
\node [scale=1.8] at (-4.25,9.5) {\LARGE $B_{1,1}$};
\node [scale=1.8] at (-15.7,9.5) {\LARGE $B_{1,\ell}$};
\node [scale=1.8] at (-10,-13.6) {\LARGE $B_{1}$};

\node [scale=1.8] at (15.1,-3.2) {\LARGE $y_{2}$};
\node [scale=1.8] at (10.2,-0.6) {\LARGE $x_{2,1}$};
\node [scale=1.8] at (22.6,-0.5) {\LARGE $x_{2,2}$};
\node [scale=1.8] at (21.45,9.5) {\LARGE $B_{2,2}$};
\node [scale=1.8] at (10.2,9.5) {\LARGE $B_{2,1}$};
\node [scale=1.8] at (15.9,-13.6) {\LARGE $B_{2}$};

\node [scale=1.8] at (15.15,-31.7) {\LARGE $y_{3}$};
\node [scale=1.8] at (11.78,-35.29) {\LARGE $x_{3,1}$};
\node [scale=1.8] at (22.68,-35.03) {\LARGE $x_{3,2}$};
\node [scale=1.8] at (10.7,-44.6) {\LARGE $B_{3,1}$};
\node [scale=1.8] at (21.7,-44.6) {\LARGE $B_{3,2}$};
\node [scale=1.8] at (15.9,-21.5) {\LARGE $B_{3}$};

\node [scale=1.8] at (-10.8,-31.7) {\LARGE $y_{4}$};
\node [scale=1.8] at (-5.7,-35.2) {\LARGE $x_{4,1}$};
\node [scale=1.8] at (-16.71,-35.1) {\LARGE $x_{4,2}$};
\node [scale=1.8] at (-15.2,-44.6) {\LARGE $B_{4,2}$};
\node [scale=1.8] at (-4.2,-44.6) {\LARGE $B_{4,1}$};
\node [scale=1.8] at (-10,-21.5) {\LARGE $B_{4}$};

\node [scale=1.8] at (3.93,-22.3) {\LARGE $y_{5}$};
\node [scale=1.8] at (-3.25,-19.6) {\LARGE $x_{5,2}$};
\node [scale=1.8] at (9.5,-19.59) {\LARGE $x_{5,\ell}$};
\node [scale=1.8] at (-2.5,-8.5) {\LARGE $B_{5,2}$};
\node [scale=1.8] at (8.5,-8.5) {\LARGE $B_{5,\ell}$};
\node [scale=1.8] at (3.1,-32.5) {\LARGE $B_{5}$};

\node [scale=1.8] at (-10,5) {\LARGE $.\ .\ .\ .$};
\node [scale=1.8] at (-10,-40) {\LARGE $.\ .\ .\ .$};
\node [scale=1.8] at (16,5) {\LARGE $.\ .\ .\ .$};
\node [scale=1.8] at (16,-40) {\LARGE $.\ .\ .\ .$};
\node [scale=1.8] at (3,-13) {\LARGE $.\ .\ .\ .$};

\end{tikzpicture}
\caption{{\small The graph $G(d,\ell)$ (for $d=5$ and $\ell\geq9$) given in Example~\ref{Exam} with $\delta\big{(}G(5,\ell)\big{)}=10$, $\Delta\big{(}G(5,\ell)\big{)}=9+\ell$, and $\TC_2\big{(}G(5,\ell)\big{)}=5(\ell+1)$. Here, each thick line/curve segment represents all possible edges between the corresponding partite sets.}}\label{Fig1}
\end{figure}

The degree in $G(d,\ell)$ of the vertices from the sets $A_i, B_{i,j}$ and $B_i$ is either $2d$ or $2d+1$. On the other hand, the degree of the vertices $y_i$ is $\ell+2d-1$. Since $\ell \ge 2$, we get $2d+1\le \ell+2d-1$. So, 
$$\delta\big{(}G(d,\ell)\big{)}=2d\quad \textrm{and}\quad \Delta\big{(}G(d,\ell)\big{)}=\ell+2d-1,$$
which satisfy $\Delta\big{(}G(d,\ell)\big{)}\geq4\lfloor \delta\big{(}G(d,\ell)\big{)}/2\rfloor-2$ as $\ell\geq2d-1$.

Now, let us present a partition $\Omega=\{V_1,\ldots,V_{|\Omega|}\}$ of $V\big{(}G(d,\ell)\big{)}$, where $|\Omega|=d(\ell+1)$, for which we will show it is a total $2$-coalition partition. For all $i\in [d]$ and $j\in [\ell]$, vertices of each partite set from $B_{i,j}$ are distributed to the sets $V_1,\ldots,V_d$, respectively. In addition, note that $S_{i,j}$ is chosen in such a way that $V(B_{i,j})\setminus S_{i,j}$ contains all vertices of one partite set of $B_{i,j}$ and an additional vertex, and we may assume that this additional vertex is in the set $V_i$. For all $i\in [d]$, we let $y_i$ belong to $V_i$. Similarly, vertices of each partite set from $B_{i}$ are distributed to the sets $V_1,\ldots,V_d$, respectively. In addition, since $S_{i}$ is chosen in such a way that $V(B_{i})\setminus S_i$ contains all vertices of one partite set of $B_{i}$ and an additional vertex, we may assume that this additional vertex is put in the set $V_i$. Finally, for each $i\in [d]$, the vertices $x_{i,1},x_{i,2},\ldots,x_{i,\ell}\in A_{i}$ are distributed into the sets 
\begin{center}
$V_{d+(i-1)\ell+1},V_{d+(i-1)\ell+2},\ldots,V_{d+i\ell}$,
\end{center} 
respectively. More precisely, $V_{d+(i-1)\ell+j}=\{x_{i,j}\}$ for all $i\in [d]$ and $j\in[\ell]$. The set $V_i$, $i\in [d]$, is not a total $2$-dominating set because $y_i$ is adjacent to exactly one vertex of $V_i$. On the other hand, $V_i$
forms a total $2$-coalition with every set in $\{V_{d+(i-1)\ell+1},V_{d+(i-1)\ell+2},\ldots,V_{d+i\ell}\}$. Hence, $\Omega$ is a total $2$-coalition partition of cardinality $d(\ell+1)$. Therefore, 
\begin{equation*}
\begin{array}{ll}
    \TC_2(G(d,\ell))&\ge d(\ell+1) \\ 
    & = d\big((\ell+2d-1)-2d+1\big)+d\\
     & =\left\lfloor\frac{\delta(G(d,\ell))}{2}\right\rfloor(\Delta(G(d,\ell))-2\left\lfloor\frac{\delta(G(d,\ell))}{2}\right\rfloor+1)+\left\lceil\frac{\delta(G(d,\ell))}{2}\right\rceil\\
     & \ge  \TC_2(G(d,\ell)).
\end{array}    
\end{equation*}
We infer that the graphs $G(d,\ell)$ attain the upper bound of Theorem \ref{thm:upperbound_delta}.

It should be noted that the bound is also sharp when $\delta=2$. In fact, the upper bound gives the exact value $\TC_2(C_n)=2$ in this case.

In view of Theorem \ref{lower} with $k=2$, the upper bound given in Theorem \ref{Combination} gives us the exact value of the total $2$-coalition number of $r$-regular graphs for $r\in \{2,3,4\}$. On the other hand, for each $\ell\geq3$, we get
\begin{center}
$\TC_{2}\big{(}G(2,\ell)\big{)}=2\ell+2=\left\lfloor \frac{\delta(G(2,\ell))}{2}\right\rfloor\big{(}\Delta\big{(}G(2,\ell)\big{)}-4\big{)}+\delta\big{(}G(2,\ell)\big{)}$.
\end{center}
Therefore, the graphs $G(2,\ell)$ attain the upper bound of Theorem \ref{Combination}.
\end{exa}

It is possible that Theorem~\ref{thm:upperbound_delta} is true even if the restriction $\Delta(G)\ge 4\lfloor \delta(G)/2\rfloor-2$ is omitted. In spite of extensive investigations, we could not find a counterexample to that statement. In addition, we base our suspicion that the restriction can be omitted because this is true in the case when $\delta(G)\le 5$, which follows from Theorem~\ref{Combination}. Based on the above discussion, we propose the following:

\begin{con}
If $G$ is a graph with $\delta=\delta(G)\ge 2$ and $\Delta=\Delta(G)$, then $$\TC_{2}(G)\leq \left\lfloor\frac{\delta}{2}\right\rfloor(\Delta-2\left\lfloor\frac{\delta}{2}\right\rfloor+1)+\left\lceil\frac{\delta}{2}\right\rceil.$$  
\end{con}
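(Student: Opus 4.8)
The plan is to keep the entire first part of the proof of Theorem~\ref{thm:upperbound_delta} intact and to attack only the single step where the extra degree assumption was spent. Writing $\mu=\lfloor\delta/2\rfloor$ and $T=\mu(\Delta-2\mu+1)+\lceil\delta/2\rceil=f(\mu)$ for the conjectured bound, the derivation of~\eqref{eq:TC2again} and~\eqref{eq:TC2againagain} uses \emph{only} a fixed minimum-degree vertex $u$, a fixed $\TC_2(G)$-partition $\Omega$, the resulting heavy classes $V_1,\ldots,V_s$ (those with $|N(u)\cap V_i|\ge 2$), the set $\Psi$ of classes missing $N(u)$, a minimum dominating subset $\{V_1,\ldots,V_r\}$ of $\Psi$ in $\TC_2G(G,\Omega)$, and the crude bound $\deg_G(v)\le\Delta$ for the witnesses $v$; none of this needs $\Delta\ge 4\mu-2$. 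Thus, unconditionally (for $\delta\ge 5$, the cases $\delta\le 4$ being settled by Theorem~\ref{Combination}), one has $\TC_2(G)\le(\delta-s)+\sum_{i=1}^{r}|\Omega_i|\le f(r)$ with $r\le s\le\mu$, and the hypothesis $\Delta\ge 4\mu-2$ was used \emph{solely} to guarantee $f(r)\le f(\mu)=T$. So the first step is to record the identity $f(r)-f(\mu)=(\mu-r)(2r+2\mu-\Delta)$ and to isolate the only open regime: $\Delta<4\mu-2$, where the realized domination number $r$ can fall in the window $\tfrac{\Delta}{2}-\mu<r<\mu$, making $f(r)>T$.

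In that window I must shave the positive excess $(\mu-r)(2r+2\mu-\Delta)$ off the estimate $(\delta-s)+\sum_{i=1}^{r}|\Omega_i|$. There are exactly two sources of slack. The first is $s-r$: replacing $\delta-r$ by the true value $\delta-s$ already saves $s-r$. The second is the per-class bound $|\Omega_i|\le\Delta-2r+1$, whose sum gives $\sum_i|\Omega_i|\le r(\Delta-2r+1)$. A short computation shows that the first source alone cannot suffice: since $\Delta\ge\delta\ge 2\mu$, whenever $\Delta\le 2r+2\mu-2$—which covers all of the dangerous window except its boundary—even the extreme value $s-r=\mu-r$ is strictly smaller than the required excess. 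Hence the heart of the matter is the second lever: I would aim to prove directly that $(\delta-s)+\sum_{i=1}^{r}|\Omega_i|\le T$ by showing that in the dangerous window the per-class bounds cannot all be (nearly) tight simultaneously, i.e. $\sum_{i=1}^{r}|\Omega_i|$ is strictly below $r(\Delta-2r+1)$ by the missing amount.

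To establish such joint non-tightness I would push the witness analysis one level deeper. If every $|\Omega_i|$ were close to $\Delta-2r+1$, then for each dominator $V_i$ a witness $v_i$ (with $|N(v_i)\cap V_i|=1$, the case $0$ being simpler) is \emph{saturated}: its $\Delta$ neighbors are forced to be one vertex in $V_i$, one in each class of $\Omega_i$, and two in each partner pair $V_j\cup V_{j'}$ for $j\in[r]\setminus\{i\}$, where $V_{j'}$ is a private neighbor of $V_j$ in $\Psi$. The idea is then to propagate the defect: each class $W\in\Omega_i$ is itself not a total $2$-dominating set and forms coalitions only with heavy classes $V_1,\ldots,V_s$ or with other classes of $\Psi$, and tracking these constraints at the saturated $v_i$ should either overload some vertex beyond degree $\Delta$ or force additional heavy classes at $u$ (increasing $s$, hence reactivating the first lever). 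A cleaner entry point, which I would treat first, is the regular case: for $d$-regular $G$ with $d$ even we have $T=\delta=d$, and since Theorem~\ref{lower} gives $\TC_2(G)\ge d$, the conjecture reduces to $\TC_2(G)\le d$, generalizing the $4$-regular case established in Claim~2 of Theorem~\ref{Combination}. Here every witness has degree exactly $d$, so all the inequalities above are rigid, and I would attempt a global double count: summing over all $x\in V(G)$ the number of coalition edges of $\TC_2G(G,\Omega)$ whose defect is witnessed at $x$, and comparing with $\sum_{x}\deg_G(x)=dn(G)$, to force $|\Omega|\le d$.

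The step I expect to be the genuine obstacle is precisely the structural lemma behind the second lever: no \emph{a priori} reason visible from local degree counting forbids the dominators $V_1,\ldots,V_r$ from having large, pairwise disjoint neighborhoods $\Omega_i$ in $\TC_2G(G,\Omega)$ while $s$ stays as small as $r$. The rigid ``saturated witness'' configuration described above is not contradicted by counting at any single vertex, so a successful argument must exploit \emph{global} incompatibilities among the witnesses of the many classes in $\bigcup_i\Omega_i$ (tightness of the type we need is realized by the graphs $G(d,\ell)$, but only in the range $\Delta\ge 4\mu-2$, outside the open regime). Producing this global argument—equivalently, showing that a near-regular graph cannot simultaneously host a coalition partition with small domination number $r$ and large neighborhoods $\Omega_i$—is exactly the content that keeps the conjecture open, and is where a genuinely new idea is required.
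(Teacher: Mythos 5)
You have not proved the statement, and neither does the paper: what you were given is posed there explicitly as a conjecture, precisely because the authors could not remove the hypothesis $\Delta\ge 4\lfloor\delta/2\rfloor-2$ from Theorem~\ref{thm:upperbound_delta}. To your credit, your reconstruction of the unconditional part of that proof is accurate: the chain through~\eqref{eq:t_i}--\eqref{eq:TC2againagain} indeed yields $\TC_2(G)\le r(\Delta-2r+1)+\delta-s\le f(r)$ with $r\le s\le\lfloor\delta/2\rfloor$ using only the witness-vertex degree bound $\deg_G(v)\le\Delta$, your identity $f(r)-f(\lfloor\delta/2\rfloor)=\bigl(\lfloor\delta/2\rfloor-r\bigr)\bigl(2r+2\lfloor\delta/2\rfloor-\Delta\bigr)$ is correct (both sides equal $r\Delta-2r^2-\lfloor\delta/2\rfloor\Delta+2\lfloor\delta/2\rfloor^2$), and your observation that the slack $s-r$ alone cannot absorb the excess when $\Delta\le 2r+2\lfloor\delta/2\rfloor-2$ checks out, since then the excess is at least $2(\lfloor\delta/2\rfloor-r)$ while $s-r\le\lfloor\delta/2\rfloor-r$. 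Your reduction in the even-regular case is also right: there $T=\delta=d$ and Theorem~\ref{lower} supplies the matching lower bound, so the conjecture says $\TC_2(G)=d$, generalizing Claim~2 of Theorem~\ref{Combination}.

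The genuine gap is exactly where you locate it, and it is not closed by anything in your proposal. The ``second lever''---that the per-class bounds $|\Omega_i|\le\Delta-2r+1$ cannot all be near-tight when $\frac{\Delta}{2}-\lfloor\delta/2\rfloor<r<\lfloor\delta/2\rfloor$---is stated as a target, not established: the saturated-witness propagation is a heuristic with no argument that the forced configurations ever collide, and the proposed double count over witnesses for regular graphs is not carried out (note that even the $4$-regular case in Claim~2 required an ad hoc case analysis on how $N(u)$ meets the partition classes, rather than any counting scheme, which is a warning sign that single-vertex degree accounting will not scale to general even $d$). You concede this yourself in your final paragraph, so your proposal should be read as a correct diagnosis of why the restriction on $\Delta$ enters the proof of Theorem~\ref{thm:upperbound_delta}---consistent with the paper's own remarks preceding the conjecture---rather than as a proof; the statement remains open in the regime $2\lfloor\delta/2\rfloor\le\Delta\le 4\lfloor\delta/2\rfloor-3$ (for $\delta\ge 6$, the cases $\delta\le 5$ being covered by Theorem~\ref{Combination}).
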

As noted earlier, if the conjecture holds, then it is widely sharp. Notably, the family of graphs $G(d,\ell)$ from Example~\ref{Exam} shows that it is sharp for an arbitrary even $\delta\ge2$ and any $\Delta\ge2\delta-2$. In addition, there are regular graphs $G$ with even $\delta(G)=\Delta(G)$ for which the bound is sharp. Consider the complete graph $K_{2p+1}$ for any integer $p\ge 1$. Note that $$\TC_2(K_{2p+1})=2p=\left\lfloor\frac{\delta}{2}\right\rfloor(\Delta-2\left\lfloor\frac{\delta}{2}\right\rfloor+1)+\left\lceil\frac{\delta}{2}\right\rceil.$$

%%%%%%%%%%%%%%%%%%%%%%%%%%%%%%%%%%%%%%%%%%%%%%%%%%%%%%%%%%%%%%%%%%%

\section{On two open problems on double coalition}

\cite{HM} proved that $\DC(G)\leq1+\Delta(G)$ for all graphs $G$ with $\delta(G)\in\{1,2\}$. They posed the following:

\medskip\noindent
\textit{Question 1. If $G$ is a graph with $\delta(G)=3$, then is it true that $\DC(G)\leq1+\Delta(G)$?}

\medskip
This is indeed the case when the graph is cubic, as they proved that $\DC(G)=4$ for each cubic graph $G$. Despite the above-mentioned pieces of evidence in support of the inequality, in what follows, we answer this question in the negative. 

Moreover, by utilizing the approach developed for total 2-domination (Theorem~\ref{thm:upperbound_delta}), we present a general upper bound on the double coalition number of a graph $G$ in terms of minimum and maximum degrees, provided that $\Delta(G)\ge 4\lceil \delta(G)/2\rceil-3$. Since the bound is sharp for all odd $\delta(G)$, where $\Delta(G)$ can be arbitrarily large, we see that the value of $\DC(G)$ can be relatively close to $\lceil \delta(G)/2\rceil \Delta(G)$. Given a graph $G$ and a double coalition partition $\Omega$, the graph $\DCG(G,\Omega)$ is defined with vertex set $\Omega$ in which two vertices/sets are adjacent if they form a double coalition. 

\begin{theorem}\label{thm:doubledomination}
If $G$ is a graph with $\delta=\delta(G)\ge1$ and $\Delta=\Delta(G)\ge 4\left\lceil\frac{\delta}{2}\right\rceil-3$, then $$\DC(G)\leq \left\lceil\frac{\delta}{2}\right\rceil(\Delta-2\left\lceil\frac{\delta}{2}\right\rceil+2)+1+\left\lfloor\frac{\delta}{2}\right\rfloor.$$  Moreover, the bound is sharp, and is attained for graphs with any odd minimum degree $\delta\ge 3$.
\end{theorem}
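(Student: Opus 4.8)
The plan is to mirror the proof of Theorem~\ref{thm:upperbound_delta} almost verbatim, replacing open neighborhoods by closed ones throughout; the two numerical shifts in the statement (the ceiling instead of the floor, and the extra additive terms) arise automatically because $|N[u]|=\delta+1$ and $|N[v]|\le\Delta+1$ rather than $\delta$ and $\Delta$. First I would dispose of the small cases $\delta\in\{1,2\}$ by quoting the bound $\DC(G)\le 1+\Delta$ of \cite{HM}, which one checks is at most the claimed expression (with equality when $\delta=1$); so I may assume $\delta\ge 3$. Let $\Omega$ be a $\DC(G)$-partition and let $u$ be a vertex of minimum degree. If every part meets $N[u]$ in at most one vertex, then no part disjoint from $N[u]$ can acquire a double-coalition partner, forcing every part to meet $N[u]$ and hence $\DC(G)=|\Omega|\le|N[u]|=\delta+1$, which is below the target. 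Otherwise let $V_1,\dots,V_s$ be the parts with $|N[u]\cap V_i|\ge 2$; since these occupy at least $2s$ of the $\delta+1$ vertices of $N[u]$, we get $s\le\lfloor(\delta+1)/2\rfloor=\lceil\delta/2\rceil$, which is precisely where the ceiling replaces the floor.

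Next I would set $\Psi=\{V\in\Omega:\, V\cap N[u]=\emptyset\}$ and observe that any partner of a $V\in\Psi$ must double-dominate $u$, so it must be one of $V_1,\dots,V_s$ (a part with at most one vertex in $N[u]$ cannot, together with a part disjoint from $N[u]$, place two vertices into $N[u]$). If $\Psi=\emptyset$ then $|\Omega|\le(\delta+1)-s$ is already within the bound, so assume $\Psi\ne\emptyset$. Then $\{V_1,\dots,V_s\}$ dominates $\Psi$ in $\DCG(G,\Omega)$; choose a minimal dominating subset, which after renaming is $\{V_1,\dots,V_r\}$ with $1\le r\le s$, and for each $i\in[r]$ let $\Omega_i$ be the neighbors of $V_i$ lying in $\Psi$, with a private member $V_{i'}\in\Omega_i\setminus\bigcup_{j\ne i}\Omega_j$ guaranteed by minimality. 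Fixing $i$, since $V_i$ is not a double-dominating set there is a vertex $v$ with $|N[v]\cap V_i|\le 1$; the main case is $|N[v]\cap V_i|=1$ (the subcase $0$ gives a strictly stronger estimate). Every $W\in\Omega_i$ forces $|N[v]\cap W|\ge 1$, and each remaining dominator $V_j$ together with its private $V_{j'}$ forces $|N[v]\cap(V_j\cup V_{j'})|\ge 2$; as all these parts are pairwise disjoint and disjoint from $V_i$, counting inside $N[v]$ gives $1+|\Omega_i|+2(r-1)\le|N[v]|\le\Delta+1$, that is, $|\Omega_i|\le\Delta-2r+2$.

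Summing as in Theorem~\ref{thm:upperbound_delta} — the parts meeting $N[u]$ in exactly one vertex number at most $(\delta+1)-2s$, and $\Psi=\bigcup_{i=1}^r\Omega_i$ — yields
\begin{equation*}
\DC(G)=|\Omega|\le(\delta+1)-s+\sum_{i=1}^r|\Omega_i|\le(\delta+1)-r+r(\Delta-2r+2)=f(r),
\end{equation*}
where $f(r)=-2r^2+(\Delta+1)r+(\delta+1)$ and $1\le r\le\lceil\delta/2\rceil$. The final step is to show $f$ is maximized at $r=\lceil\delta/2\rceil$: since $f(r)-f(r-1)=\Delta-4r+3$, the map $f$ is nondecreasing for $r\le\lceil\delta/2\rceil$ exactly when $\Delta\ge4\lceil\delta/2\rceil-3$, which is the hypothesis, and then $f(\lceil\delta/2\rceil)=\lceil\delta/2\rceil(\Delta-2\lceil\delta/2\rceil+2)+1+\lfloor\delta/2\rfloor$ is the claimed bound.

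I expect the main obstacle to be the careful closed-neighborhood bookkeeping in the estimate $|\Omega_i|\le\Delta-2r+2$: one must verify that the one vertex of $V_i$, the representatives of the $|\Omega_i|$ coalition partners, and the two private vertices contributed by each of the other $r-1$ dominators all lie in $N[v]$ and occupy genuinely distinct parts, and separately treat the easier subcase $N[v]\cap V_i=\emptyset$, where each $W\in\Omega_i$ now contributes two vertices. For sharpness I would adapt the family $G(d,\ell)$ of Example~\ref{Exam} to the closed-neighborhood setting, tuning the minimum degree to the prescribed odd value $\delta=2d-1$ so that $\lceil\delta/2\rceil=d$ and the coalition structure produces $\DC=d(\Delta-2d+2)+1+(d-1)$; verifying that this modified construction is a double-coalition partition of the required size is then the remaining computation.
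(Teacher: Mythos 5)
Your upper-bound argument is correct and is essentially the paper's own proof: the same reduction at a minimum-degree vertex $u$ with $s\le\lceil\delta/2\rceil$ forced by $|N[u]|=\delta+1$, the same set $\Psi$ and minimal dominating subfamily $\{V_1,\ldots,V_r\}$ with private neighbors $V_{j'}$, the same count $1+|\Omega_i|+2(r-1)\le |N[v]|\le \Delta+1$ yielding $|\Omega_i|\le\Delta-2r+2$, the same summation $|\Omega|\le(\delta+1)-s+\sum_{i=1}^r|\Omega_i|$, and the same monotonicity analysis of $f(r)=-2r^2+(\Delta+1)r+\delta+1$ under the hypothesis $\Delta\ge 4\lceil\delta/2\rceil-3$. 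Your two small deviations are harmless: quoting the known bound $\DC(G)\le 1+\Delta(G)$ for $\delta\in\{1,2\}$ is unnecessary, since the opening reduction $\DC(G)\le\delta+1$ already covers all $\delta\ge 1$; and treating $\Psi=\emptyset$ as ``within the bound'' rather than ``impossible'' (the paper's phrasing) is if anything cleaner, as it avoids appealing to any lower bound on $\DC$.

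The one substantive shortfall is the sharpness half of the theorem, which you leave as a plan (``the remaining computation''). The paper executes precisely the tuning you anticipate, but the details are not cosmetic: in its family $H(r,t)$ the tripartite gadgets $K_{d,d,d}$ of Example~\ref{Exam} are replaced by cliques $K_{2r}$, each $x_{i,j}$ is joined to all of $B_{i,j}$, and---this is the step that produces the odd minimum degree---$y_i$ is joined to only $2r-2$ vertices of its clique $B_i$, so the two unjoined clique vertices have degree $2r-1=\delta$. The partition then places exactly two vertices of every clique in each of $V_1,\ldots,V_r$, with $y_i$ and the two degree-$(2r-1)$ vertices of $B_i$ assigned to $V_i$, so that $y_i$ witnesses $|N[y_i]\cap V_i|=1$ (hence $V_i$ is not double dominating), while each singleton $\{x_{i,j}\}$ forms a double coalition with $V_i$; this yields $\DC(H(r,t))\ge r(t+1)$, meeting the bound with $\delta=2r-1$ and $\Delta=2r-2+t$. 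So your proposal fully proves the inequality but only gestures at the extremal construction; to establish the theorem as stated, that construction (or an equivalent one) must actually be exhibited and verified.
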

\begin{proof}
Let $\Omega=\{V_{1},\ldots,V_{|\Omega|}\}$ be a $\DC(G)$-partition, and let $u$ be a vertex of minimum degree in $G$. If $|N[u]\cap V_i|\le 1$ for all $i\in[|\Omega|]$, then $\DC(G)\le \delta(G)+1$, which directly implies the statement of the theorem.

Thus, we may assume that there exists an integer $s\ge 1$ such that, without loss of generality, $|N[u]\cap V_i|\ge 2$ for all $V_i\in \Omega$ with $i\in[s]$, while $|N[u]\cap V_i|\le 1$ if $i>s$. Clearly, $s\le \lceil \delta/2\rceil$. Let $\Psi\subsetneq\Omega$ be the set of all $V_j$ such that $V_j\cap N[u]=\emptyset$. If $\Psi=\emptyset$, then $\DC(G)=|\Omega|\le s+(\delta+1-2s)<\delta+1$, which is impossible. Thus, $\Psi\ne\emptyset$. Note that every $V_j\in \Psi$ forms a double coalition with some $V_i$, where $i\in[s]$. In other words, in the graph $\DCG(G,\Omega)$, the vertices $V_1,\ldots, V_s$ dominate all vertices in $\Psi$. Let $r$ be the smallest number of vertices in $\{V_1,\ldots, V_s\}$ that dominate all vertices of $\Psi$. By renaming the sets if necessary, let $V_1,\ldots, V_r$ dominate all vertices in $\Psi$. Clearly, $r\in[s]$. 

For each $i\in[r]$, let $\Omega_{i}$ be the set of neighbors of $V_{i}$ in the graph $\DCG(G,\Omega)$ that belong to $\Psi$. By our choice of $r$, we deduce that $\Omega_{i}\nsubseteq \cup_{j\in[r]\setminus \{i\}}\Omega_{j}$. Since $V_i$ is not a double dominating set of $G$, there exists a vertex $v\in V(G)$ such that $|V_i\cap N[v]|\le 1$. Assume that $|V_i\cap N[v]|=1$. (The case when $|V_i\cap N[v]|=0$ uses similar, yet slightly simpler arguments.) Then, all sets in $\Omega_{i}$ must have a non-empty intersection with $N[v]$. In addition, for every set $V_j\in\{V_1,\ldots,V_r\}\setminus\{V_i\}$, there exists a set $V_{j'}\in \Omega_{j}\setminus \cup_{t\in[r]\setminus \{j\}}\Omega_{t}$. Thus, there are at least two vertices in $N[v]$ that belong to $V_j\cup V_{j'}$. Altogether, we infer that $|\Omega_{i}|+2r-1\le \deg_G(v)+1\le \Delta+1$. Thus, for all $i\in[r]$, we have
\begin{equation}
\label{eq:t_inova}
|\Omega_{i}|\le \Delta-2r+2.
\end{equation}

Since $\{V_1,\ldots,V_r\}$ dominates $\Psi$ in $\DCG(G,\Omega)$, we deduce that \begin{equation}
\label{eq:DC}
\textrm{DC}(G)=|\Omega|\le s+\sum_{i=1}^r{|\Omega_{i}|}+\delta+1-2s=\sum_{i=1}^r{|\Omega_{i}|}+\delta+1-s.
\end{equation}
Combining \eqref{eq:t_inova} and \eqref{eq:DC} we get
\begin{equation}
\label{eq:DCagain}
\DC(G)\le r(\Delta-2r+2)+\delta+1-s.    
\end{equation}

Note that $r\le s\le \lceil\delta/2\rceil$, and so the upper bound in~\eqref{eq:DCagain} is in turn bounded from above as follows:
\begin{equation}
\label{eq:DCagainagain}
r(\Delta-2r+2)+\delta+1-s\le r(\Delta-2r+2)+\delta+1-r=f(r).
\end{equation}

If $r=\lceil\delta/2\rceil$, then we get the desired upper bound. Now let $r<\lceil\delta/2\rceil$. Since $\Delta\geq 4\lceil\delta/2\rceil-3$, it follows that $f$ is a nondecreasing function on $[1,\lceil\delta/2\rceil-1]$. Therefore, 
\begin{align*}
\DC(G) & \leq f(r)\leq f(\lceil \frac{\delta}{2}\rceil-1)=f(\lceil \frac{\delta}{2}\rceil)+4\lceil \frac{\delta}{2}\rceil-\Delta-3\leq f(\lceil \frac{\delta}{2}\rceil) \\
& = \left\lceil\frac{\delta}{2}\right\rceil(\Delta-2\left\lceil\frac{\delta}{2}\right\rceil+2)+1+\left\lfloor\frac{\delta}{2}\right\rfloor\,,
\end{align*}
as desired. 

\begin{figure}[ht!]
\centering
\begin{tikzpicture}[scale=0.40, transform shape]
\node [draw, shape=circle, fill=black] (y_{1}) at (0,0) {};
\node [draw, shape=circle, fill=black] (x_{1,1}) at (-4.5,2) {};
\node [draw, shape=circle, fill=black] (x_{1,3}) at (4.5,2) {};
\draw (x_{1,1})--(y_{1});
\draw (y_{1})--(x_{1,3});
\node [scale=1.8] at (-0.8,-0.13) {\large $y_{1}$};
\node [scale=1.8] at (4.5,1.2) {\large $x_{1,1}$};
\node [scale=1.8] at (-4.5,1.2) {\large $x_{1,t}$};
\node [scale=1.8] at (0,-4.9) {\Large $B_{1}$};
\node [scale=1.8] at (-4.5,8.95) {\Large $B_{1,t}$};
\node [scale=1.8] at (4.5,8.95) {\Large $B_{1,1}$};

\node [draw, shape=circle, fill=black] (111) at (-6,4) {};
\node [draw, shape=circle, fill=black] (112) at (-3,4) {};
\node [draw, shape=circle, fill=black] (113) at (-5.85,5) {};
\node [draw, shape=circle, fill=black] (114) at (-3.15,5) {};
\node [draw, shape=circle, fill=black] (115) at (-5.7,6) {};
\node [draw, shape=circle, fill=black] (116) at (-3.3,6) {};
\node [draw, shape=circle, fill=black] (117) at (-5.55,7) {};
\node [draw, shape=circle, fill=black] (118) at (-3.45,7) {};
\node [draw, shape=circle, fill=black] (119) at (-5.1,7.8) {};
\node [draw, shape=circle, fill=black] (110) at (-3.9,7.8) {};

\draw (111)--(x_{1,1})--(112);
\draw (113)--(x_{1,1})--(114);
\draw (115)--(x_{1,1})--(116);
\draw (117)--(x_{1,1})--(118);
\draw (119)--(x_{1,1})--(110);

\node [draw, shape=circle, fill=black] (131) at (3.1,4) {};
\node [draw, shape=circle, fill=black] (132) at (6.1,4) {};
\node [draw, shape=circle, fill=black] (133) at (3.25,5) {};
\node [draw, shape=circle, fill=black] (134) at (5.95,5) {};
\node [draw, shape=circle, fill=black] (135) at (3.4,6) {};
\node [draw, shape=circle, fill=black] (136) at (5.8,6) {};
\node [draw, shape=circle, fill=black] (137) at (3.6,7) {};
\node [draw, shape=circle, fill=black] (138) at (5.65,7) {};
\node [draw, shape=circle, fill=black] (139) at (4,7.8) {};
\node [draw, shape=circle, fill=black] (130) at (5.2,7.8) {};

\draw (131)--(x_{1,3})--(132);
\draw (133)--(x_{1,3})--(134);
\draw (135)--(x_{1,3})--(136);
\draw (137)--(x_{1,3})--(138);
\draw (139)--(x_{1,3})--(130);

\node [draw, shape=circle, fill=black] (z_{11}) at (-3,-2.5) {};
\node [draw, shape=circle, fill=black] (z_{12}) at (-1.5,-2.5) {};
\node [draw, shape=circle, fill=black] (z_{13}) at (0,-2.5) {};
\node [draw, shape=circle, fill=black] (z_{14}) at (1.5,-2.5) {};
\node [draw, shape=circle, fill=black] (z_{15}) at (3,-2.5) {};
\node [draw, shape=circle, fill=black] (z_{16}) at (-3,-3.5) {};
\node [draw, shape=circle, fill=black] (z_{17}) at (-1.5,-3.5) {};
\node [draw, shape=circle, fill=black] (z_{18}) at (0,-3.5) {};
\node [draw, shape=circle, fill=black] (z_{19}) at (1.5,-3.5) {};
\node [draw, shape=circle, fill=black] (z_{10}) at (3,-3.5) {};

\draw (z_{11})--(y_{1})--(z_{12});
\draw (z_{14})--(y_{1})--(z_{15});
\draw (z_{16})--(y_{1})--(z_{17});
\draw (z_{19})--(y_{1})--(z_{10});

%%%%%%%%%%%%%%%%%%%%%%%%%%%%%%%%%%%%%%%%%%%%%%%%%%%

\node [draw, shape=circle, fill=black] (y_{2}) at (20,0) {};
\node [draw, shape=circle, fill=black] (x_{2,1}) at (15.5,2) {};
\node [draw, shape=circle, fill=black] (x_{2,3}) at (24.5,2) {};
\draw (x_{2,1})--(y_{2});
\draw (y_{2})--(x_{2,3});
\node [scale=1.8] at (19.15,-0.13) {\large $y_{2}$};
\node [scale=1.8] at (15.5,1.2) {\large $x_{2,1}$};
\node [scale=1.8] at (25.25,1.2) {\large $x_{2,2}$};
\node [scale=1.8] at (20,-4.9) {\Large $B_{2}$};
\node [scale=1.8] at (15.5,8.95) {\Large $B_{2,1}$};
\node [scale=1.8] at (24.5,8.95) {\Large $B_{2,2}$};

\node [draw, shape=circle, fill=black] (211) at (14,4) {};
\node [draw, shape=circle, fill=black] (212) at (17,4) {};
\node [draw, shape=circle, fill=black] (213) at (14.15,5) {};
\node [draw, shape=circle, fill=black] (214) at (16.85,5) {};
\node [draw, shape=circle, fill=black] (215) at (14.35,6) {};
\node [draw, shape=circle, fill=black] (216) at (16.7,6) {};
\node [draw, shape=circle, fill=black] (217) at (14.5,7) {};
\node [draw, shape=circle, fill=black] (218) at (16.55,7) {};
\node [draw, shape=circle, fill=black] (219) at (14.85,7.8) {};
\node [draw, shape=circle, fill=black] (210) at (16.2,7.8) {};

\draw (211)--(x_{2,1})--(212);
\draw (213)--(x_{2,1})--(214);
\draw (215)--(x_{2,1})--(216);
\draw (217)--(x_{2,1})--(218);
\draw (219)--(x_{2,1})--(210);

\node [draw, shape=circle, fill=black] (231) at (23,4) {};
\node [draw, shape=circle, fill=black] (232) at (26,4) {};
\node [draw, shape=circle, fill=black] (233) at (23.15,5) {};
\node [draw, shape=circle, fill=black] (234) at (25.85,5) {};
\node [draw, shape=circle, fill=black] (235) at (23.35,6) {};
\node [draw, shape=circle, fill=black] (236) at (25.7,6) {};
\node [draw, shape=circle, fill=black] (237) at (23.5,7) {};
\node [draw, shape=circle, fill=black] (238) at (25.55,7) {};
\node [draw, shape=circle, fill=black] (239) at (23.85,7.8) {};
\node [draw, shape=circle, fill=black] (230) at (25.2,7.8) {};

\draw (231)--(x_{2,3})--(232);
\draw (233)--(x_{2,3})--(234);
\draw (235)--(x_{2,3})--(236);
\draw (237)--(x_{2,3})--(238);
\draw (239)--(x_{2,3})--(230);

\node [draw, shape=circle, fill=black] (z_{21}) at (17,-2.5) {};
\node [draw, shape=circle, fill=black] (z_{22}) at (18.5,-2.5) {};
\node [draw, shape=circle, fill=black] (z_{23}) at (20,-2.5) {};
\node [draw, shape=circle, fill=black] (z_{24}) at (21.5,-2.5) {};
\node [draw, shape=circle, fill=black] (z_{25}) at (23,-2.5) {};
\node [draw, shape=circle, fill=black] (z_{26}) at (17,-3.5) {};
\node [draw, shape=circle, fill=black] (z_{27}) at (18.5,-3.5) {};
\node [draw, shape=circle, fill=black] (z_{28}) at (20,-3.5) {};
\node [draw, shape=circle, fill=black] (z_{29}) at (21.5,-3.5) {};
\node [draw, shape=circle, fill=black] (z_{20}) at (23,-3.5) {};

\draw (z_{21})--(y_{2})--(z_{22});
\draw (z_{24})--(y_{2})--(z_{25});
\draw (z_{26})--(y_{2})--(z_{27});
\draw (z_{29})--(y_{2})--(z_{20});

%%%%%%%%%%%%%%%%%%%%%%%%%%%%%%%%%%%%%%%%%%%%

\node [draw, shape=circle, fill=black] (z_{41}) at (17,-16.5) {};
\node [draw, shape=circle, fill=black] (z_{42}) at (18.5,-16.5) {};
\node [draw, shape=circle, fill=black] (z_{43}) at (20,-16.5) {};
\node [draw, shape=circle, fill=black] (z_{44}) at (21.5,-16.5) {};
\node [draw, shape=circle, fill=black] (z_{45}) at (23,-16.5) {};
\node [draw, shape=circle, fill=black] (z_{46}) at (17,-17.5) {};
\node [draw, shape=circle, fill=black] (z_{47}) at (18.5,-17.5) {};
\node [draw, shape=circle, fill=black] (z_{48}) at (20,-17.5) {};
\node [draw, shape=circle, fill=black] (z_{49}) at (21.5,-17.5) {};
\node [draw, shape=circle, fill=black] (z_{40}) at (23,-17.5) {};

\node [draw, shape=circle, fill=black] (y_{4}) at (20,-20) {};
\node [draw, shape=circle, fill=black] (x_{4,1}) at (15.5,-22) {};
\node [draw, shape=circle, fill=black] (x_{4,3}) at (24.5,-22) {};
\draw (x_{4,1})--(y_{4});
\draw (y_{4})--(x_{4,3});
\node [scale=1.8] at (25.3,-21.3) {\large $x_{3,2}$};
\node [scale=1.8] at (15.5,-21.25) {\large $x_{3,1}$};
\node [scale=1.8] at (19.1,-19.85) {\large $y_{3}$};
\node [scale=1.8] at (20,-15.1) {\Large $B_{3}$};
\node [scale=1.8] at (15.5,-29) {\Large $B_{3,1}$};
\node [scale=1.8] at (24.5,-29) {\Large $B_{3,2}$};

\draw (z_{41})--(y_{4})--(z_{42});
\draw (z_{44})--(y_{4})--(z_{45});
\draw (z_{46})--(y_{4})--(z_{47});
\draw (z_{49})--(y_{4})--(z_{40});

\node [draw, shape=circle, fill=black] (411) at (14,-24) {}; 
\node [draw, shape=circle, fill=black] (412) at (17,-24) {};
\node [draw, shape=circle, fill=black] (413) at (14.15,-25) {}; 
\node [draw, shape=circle, fill=black] (414) at (16.85,-25) {};
\node [draw, shape=circle, fill=black] (415) at (14.3,-26) {}; 
\node [draw, shape=circle, fill=black] (416) at (16.7,-26) {};
\node [draw, shape=circle, fill=black] (417) at (14.45,-27) {}; 
\node [draw, shape=circle, fill=black] (418) at (16.55,-27) {};
\node [draw, shape=circle, fill=black] (419) at (14.9,-27.8) {}; 
\node [draw, shape=circle, fill=black] (410) at (16.1,-27.8) {};

\draw (411)--(x_{4,1})--(412);
\draw (413)--(x_{4,1})--(414);
\draw (415)--(x_{4,1})--(416);
\draw (417)--(x_{4,1})--(418);
\draw (419)--(x_{4,1})--(410);

\node [draw, shape=circle, fill=black] (431) at (23,-24) {}; 
\node [draw, shape=circle, fill=black] (432) at (26,-24) {};
\node [draw, shape=circle, fill=black] (433) at (23.15,-25) {}; 
\node [draw, shape=circle, fill=black] (434) at (25.85,-25) {};
\node [draw, shape=circle, fill=black] (435) at (23.3,-26) {}; 
\node [draw, shape=circle, fill=black] (436) at (25.7,-26) {};
\node [draw, shape=circle, fill=black] (437) at (23.45,-27) {}; 
\node [draw, shape=circle, fill=black] (438) at (25.55,-27) {};
\node [draw, shape=circle, fill=black] (439) at (23.9,-27.8) {}; 
\node [draw, shape=circle, fill=black] (430) at (25.1,-27.8) {};

\draw (431)--(x_{4,3})--(432);
\draw (433)--(x_{4,3})--(434);
\draw (435)--(x_{4,3})--(436);
\draw (437)--(x_{4,3})--(438);
\draw (439)--(x_{4,3})--(430);

%%%%%%%%%%%%%%%%%%%%%%%%%%%%%%%%%%%%%%%%%%%%%%%%%%

\node [draw, shape=circle, fill=black] (z_{31}) at (-3,-16.5) {};
\node [draw, shape=circle, fill=black] (z_{32}) at (-1.5,-16.5) {};
\node [draw, shape=circle, fill=black] (z_{33}) at (0,-16.5) {};
\node [draw, shape=circle, fill=black] (z_{34}) at (1.5,-16.5) {};
\node [draw, shape=circle, fill=black] (z_{35}) at (3,-16.5) {};
\node [draw, shape=circle, fill=black] (z_{36}) at (-3,-17.5) {};
\node [draw, shape=circle, fill=black] (z_{37}) at (-1.5,-17.5) {};
\node [draw, shape=circle, fill=black] (z_{38}) at (0,-17.5) {};
\node [draw, shape=circle, fill=black] (z_{39}) at (1.5,-17.5) {};
\node [draw, shape=circle, fill=black] (z_{30}) at (3,-17.5) {};

\node [draw, shape=circle, fill=black] (y_{3}) at (0,-20) {};
\node [draw, shape=circle, fill=black] (x_{3,1}) at (-4.5,-22) {};
\node [draw, shape=circle, fill=black] (x_{3,3}) at (4.5,-22) {};
\draw (x_{3,1})--(y_{3});
\draw (y_{3})--(x_{3,3});
\node [scale=1.8] at (-0.9,-19.8) {\large $y_{4}$};
\node [scale=1.8] at (4.5,-21.25) {\large $x_{4,1}$};
\node [scale=1.8] at (-5.25,-21.3) {\large $x_{4,2}$};
\node [scale=1.8] at (4.5,-29) {\Large $B_{4,1}$};
\node [scale=1.8] at (-4.5,-29) {\Large $B_{4,3}$};
\node [scale=1.8] at (0,-15.1) {\Large $B_{4}$};

\draw (z_{31})--(y_{3})--(z_{32});
\draw (z_{34})--(y_{3})--(z_{35});
\draw (z_{36})--(y_{3})--(z_{37});
\draw (z_{39})--(y_{3})--(z_{30});

\node [draw, shape=circle, fill=black] (311) at (-6,-24) {}; 
\node [draw, shape=circle, fill=black] (312) at (-3,-24) {};
\node [draw, shape=circle, fill=black] (313) at (-5.85,-25) {}; 
\node [draw, shape=circle, fill=black] (314) at (-3.15,-25) {};
\node [draw, shape=circle, fill=black] (315) at (-5.7,-26) {}; 
\node [draw, shape=circle, fill=black] (316) at (-3.3,-26) {};
\node [draw, shape=circle, fill=black] (317) at (-5.55,-27) {}; 
\node [draw, shape=circle, fill=black] (318) at (-3.45,-27) {};
\node [draw, shape=circle, fill=black] (319) at (-5.1,-27.8) {}; 
\node [draw, shape=circle, fill=black] (310) at (-3.9,-27.8) {};

\draw (311)--(x_{3,1})--(312);
\draw (313)--(x_{3,1})--(314);
\draw (315)--(x_{3,1})--(316);
\draw (317)--(x_{3,1})--(318);
\draw (319)--(x_{3,1})--(310);

\node [draw, shape=circle, fill=black] (331) at (3,-24) {}; 
\node [draw, shape=circle, fill=black] (332) at (6,-24) {};
\node [draw, shape=circle, fill=black] (333) at (3.15,-25) {}; 
\node [draw, shape=circle, fill=black] (334) at (5.85,-25) {};
\node [draw, shape=circle, fill=black] (335) at (3.3,-26) {}; 
\node [draw, shape=circle, fill=black] (336) at (5.7,-26) {};
\node [draw, shape=circle, fill=black] (337) at (3.45,-27) {}; 
\node [draw, shape=circle, fill=black] (338) at (5.55,-27) {};
\node [draw, shape=circle, fill=black] (339) at (3.9,-27.8) {}; 
\node [draw, shape=circle, fill=black] (330) at (5.1,-27.8) {};

\draw (331)--(x_{3,3})--(332);
\draw (333)--(x_{3,3})--(334);
\draw (335)--(x_{3,3})--(336);
\draw (337)--(x_{3,3})--(338);
\draw (339)--(x_{3,3})--(330);

%%%%%%%%%%%%%%%%%%%%%%%%%%%%%%%%%%%%%%%%%%%%%%%%%%

\node [draw, shape=circle, fill=black] (y_{5}) at (10,-13) {};
\node [draw, shape=circle, fill=black] (x_{5,1}) at (5.5,-11) {};
\node [draw, shape=circle, fill=black] (x_{5,3}) at (14.5,-11) {};
\draw (x_{5,1})--(y_{5});
\draw (y_{5})--(x_{5,3});
\node [scale=1.8] at (5.5,-11.9) {\large $x_{5,2}$};
\node [scale=1.8] at (14.5,-11.85) {\large $x_{5,t}$};
\node [scale=1.8] at (9.15,-13.1) {\large $y_{5}$};
\node [scale=1.8] at (10,-17.9) {\Large $B_{5}$};
\node [scale=1.8] at (5.5,-4.08) {\Large $B_{5,2}$};
\node [scale=1.8] at (14.5,-4.08) {\Large $B_{5,t}$};

\node [draw, shape=circle, fill=black] (511) at (4,-9) {};
\node [draw, shape=circle, fill=black] (512) at (7,-9) {};
\node [draw, shape=circle, fill=black] (513) at (4.15,-8) {};
\node [draw, shape=circle, fill=black] (514) at (6.85,-8) {};
\node [draw, shape=circle, fill=black] (515) at (4.3,-7) {};
\node [draw, shape=circle, fill=black] (516) at (6.7,-7) {};
\node [draw, shape=circle, fill=black] (517) at (4.45,-6) {};
\node [draw, shape=circle, fill=black] (518) at (6.55,-6) {};
\node [draw, shape=circle, fill=black] (519) at (4.9,-5.2) {};
\node [draw, shape=circle, fill=black] (510) at (6.1,-5.2) {};

\draw (511)--(x_{5,1})--(512);
\draw (513)--(x_{5,1})--(514);
\draw (515)--(x_{5,1})--(516);
\draw (517)--(x_{5,1})--(518);
\draw (519)--(x_{5,1})--(510);

\node [draw, shape=circle, fill=black] (531) at (13.1,-9) {};
\node [draw, shape=circle, fill=black] (532) at (16.1,-9) {};
\node [draw, shape=circle, fill=black] (533) at (13.25,-8) {};
\node [draw, shape=circle, fill=black] (534) at (15.95,-8) {};
\node [draw, shape=circle, fill=black] (535) at (13.4,-7) {};
\node [draw, shape=circle, fill=black] (536) at (15.8,-7) {};
\node [draw, shape=circle, fill=black] (537) at (13.6,-6) {};
\node [draw, shape=circle, fill=black] (538) at (15.65,-6) {};
\node [draw, shape=circle, fill=black] (539) at (14,-5.2) {};
\node [draw, shape=circle, fill=black] (530) at (15.2,-5.2) {};

\draw (531)--(x_{5,3})--(532);
\draw (533)--(x_{5,3})--(534);
\draw (535)--(x_{5,3})--(536);
\draw (537)--(x_{5,3})--(538);
\draw (539)--(x_{5,3})--(530);

\node [draw, shape=circle, fill=black] (z_{51}) at (7,-15.5) {};
\node [draw, shape=circle, fill=black] (z_{52}) at (8.5,-15.5) {};
\node [draw, shape=circle, fill=black] (z_{53}) at (10,-15.5) {};
\node [draw, shape=circle, fill=black] (z_{54}) at (11.5,-15.5) {};
\node [draw, shape=circle, fill=black] (z_{55}) at (13,-15.5) {};
\node [draw, shape=circle, fill=black] (z_{56}) at (7,-16.5) {};
\node [draw, shape=circle, fill=black] (z_{57}) at (8.5,-16.5) {};
\node [draw, shape=circle, fill=black] (z_{58}) at (10,-16.5) {};
\node [draw, shape=circle, fill=black] (z_{59}) at (11.5,-16.5) {};
\node [draw, shape=circle, fill=black] (z_{50}) at (13,-16.5) {};

\draw (z_{51})--(y_{5})--(z_{52});
\draw (z_{54})--(y_{5})--(z_{55});
\draw (z_{56})--(y_{5})--(z_{57});
\draw (z_{59})--(y_{5})--(z_{50});

%%%%%%%%%%%%%%%%%%%%%%%%%%%%%%%%%%%%%%%%%%%%%%

\draw (x_{1,3})--(x_{2,1});
\draw (x_{2,3})--(x_{4,3});
\draw (x_{4,1})--(x_{3,3});
\draw (x_{3,1}) .. controls (-5,-13) and (-5,-13) .. (x_{5,1}); 

\draw[dashed] (-4.5,5.6) ellipse (2.25cm and 2.7cm);
\draw[dashed] (4.6,5.6) ellipse (2.25cm and 2.7cm);
\draw[dashed] (15.5,5.6) ellipse (2.25cm and 2.7cm);
\draw[dashed] (24.5,5.6) ellipse (2.25cm and 2.7cm);
\draw[dashed] (0,-3) ellipse (4cm and 1.3cm);
\draw[dashed] (24.5,-25.5) ellipse (2.25cm and 2.75cm);
\draw[dashed] (15.5,-25.5) ellipse (2.25cm and 2.75cm);
\draw[dashed] (4.5,-25.5) ellipse (2.25cm and 2.75cm);
\draw[dashed] (-4.5,-25.5) ellipse (2.25cm and 2.75cm);
\draw[dashed] (20,-3) ellipse (4cm and 1.3cm);
\draw[dashed] (20,-17) ellipse (4cm and 1.3cm);
\draw[dashed] (0,-17) ellipse (4cm and 1.3cm);
\draw[dashed] (10,-16) ellipse (4cm and 1.3cm);

\draw[dashed] (14.6,-7.5) ellipse (2.25cm and 2.75cm);
\draw[dashed] (5.5,-7.5) ellipse (2.25cm and 2.75cm);

\node [scale=1.8] at (0,5) {\LARGE $.\ .\ .\ .$};
\node [scale=1.8] at (20,5) {\LARGE $.\ .\ .\ .$};
\node [scale=1.8] at (0,-25) {\LARGE $.\ .\ .\ .$};
\node [scale=1.8] at (20,-25) {\LARGE $.\ .\ .\ .$};
\node [scale=1.8] at (10,-8) {\LARGE $.\ .\ .\ .$};

\end{tikzpicture}
\caption{{\small The graph $H(r,t)$ \big(for $r=5$ and $t\geq9$\big) given in the proof of Theorem~\ref{thm:doubledomination} with $\delta\big{(}H(5,t)\big{)}=9$, $\Delta\big{(}H(5,t)\big{)}=8+t$, and $\DC\big{(}H(5,t)\big{)}=5(t+1)$. Vertices in each of the dashed ellipses form the clique $K_{10}$.}}
\label{Fig2}
\end{figure}
%%%%%%%%%%%%%%%%%%%%%%%%%%%%%%%%%%%%%%%%

For the sharpness of this upper bound, we present the family of graphs
$H(r,t)$, where $r\ge 2$ and $t\ge 4$ are integers with $t\geq2r-1$, as follows. For each $i\in [r]$, consider the set of vertices $$A_i=\{x_{i,j}:\, j\in [t]\},$$ 
and join a new vertex $y_i$ to each $x_{i,j}$ so that $A_i\cup\{y_i\}$ induces a star $K_{1,t}$. For all $i\in [r-1]$, add to the graph the edge $x_{i,1}x_{i+1,1}$ if $i$ is odd, and the edge $x_{i,2}x_{i+1,2}$ if $i$ is even. Next, for each $i\in [r]$ and $j\in [t]$, take a copy of the complete graph $K_{2r}$, and denote them by $B_{i,j}$. Join $x_{i,j}$ with all vertices from $B_{i,j}$. 
Similarly, for each $i\in [r]$ take a copy of $K_{2r}$, denote it by $B_i$, and join $y_i$ to $2r-2$ vertices in $B_i$. The resulting graph is connected, and we denote it by $H(r,t)$. See Fig.~\ref{Fig2} depicting $H(5,t)$ for $t\geq9$.  

The degree in $H(r,t)$ of the vertices in the sets $B_{i,j}$ is $2r$, the degree of the vertices in the sets $B_i$ is either $2r-1$ or $2r$, and the degree of the vertices in the sets $A_i$ is either $2r+1$ or $2r+2$. On the other hand, the degree of the vertices $y_i$ is $2r-2+t$, which is greater than or equal to $2r+2$  as $t\geq4$. Thus, 
\begin{center}
$\delta\big{(}H(r,t)\big{)}=2r-1 \textrm{ and } \Delta\big{(}H(r,t)\big{)}=2r-2+t$.
\end{center}
Moreover, $\Delta\big{(}H(r,t)\big{)}\geq4\lceil \delta\big{(}H(r,t)\big{)}/2\rceil-3$ as $t\geq2r-1$.

Now, let us present a partition $\Omega=\{V_1,\ldots,V_{|\Omega|}\}$ of $V\big{(}H(r,t)\big{)}$, where $|\Omega|=r(t+1)$, for which we will show it is a double coalition partition. For all $i\in [r]$ and $j\in [t]$, we let $|B_{i,j}\cap V_k|=2$ for every $k\in [r]$.
For all $i\in [r]$, we let $y_i$ belong to $V_i$. Next, for every $i\in [r]$, let $|B_i\cap V_k|=2$ for all $k\in [r]$ so that the two vertices of degree $2r-1$ in each $B_i$ belong to $V_i$.
Finally, for each $i\in [r]$ and $j\in [t]$, let $V_{r+(i-1)t+j}=\{x_{i,j}\}$. No set $V_i$, $i\in [r]$, is a double dominating set, but it forms a double coalition with every set in $\{V_{r+(i-1)t+1},V_{r+(i-1)t+2},\ldots,V_{r+it}\}$. Hence, $\Omega$ is a double coalition partition of cardinality $r(t+1)$. Therefore, 
\begin{equation*}
\begin{array}{ll}
    \DC\big{(}H(r,t)\big{)}&\ge r(t+1) \\ 
    & = r\big((2r-2+t)-2r+2\big)+1+(r-1)\\
     & =\left\lceil\frac{\delta(H(r,t))}{2}\right\rceil\big(\Delta(H(r,t))-2\left\lceil\frac{\delta(H(r,t))}{2}\right\rceil+2 \big)+1+\left\lfloor\frac{\delta(H(r,t))}{2}\right\rfloor\\
     & \ge \DC\big{(}H(r,t)\big{)}.
\end{array}    
\end{equation*}
Hence, the graphs $H(r,t)$ attain the upper bound.  
\end{proof}

The graphs $H(r,t)$ attain the bound in Theorem~\ref{thm:doubledomination} for all $r\ge 2$ and all $t\ge 4$, such that $t\geq2r-1$, and $$\DC\big{(}H(r,t)\big{)}=\left\lceil\frac{\delta(H(r,t))}{2}\right\rceil\Big(\Delta(H(r,t))-2\left\lceil\frac{\delta(H(r,t))}{2}\right\rceil+2 \Big)+1+\left\lfloor\frac{\delta(H(r,t))}{2}\right\rfloor$$
holds with $\delta\big{(}H(r,t)\big{)}=2r-1 \textrm{ and } \Delta\big{(}H(r,t)\big{)}=2r-2+t$. Clearly, this gives the negative answer to the question ``if $G$ is a graph with $\delta(G)=3$, then is it true that DC$(G)\leq \Delta(G)+1?$" in~\cite{HM}. Furthermore, the difference $\DC(H(r,t))-(\Delta(H(r,t))+1)$ can be made arbitrarily large. 

We remark that Theorem~\ref{thm:doubledomination} and the family of graphs $H(r,t)$ give an incomplete, but relatively satisfying answer to the following problem posed by~\cite{HM}:\\
``A natural problem is to determine a best possible upper bound on the double coalition number of a graph $G$ in terms of its minimum degree, $\delta(G)$, and maximum degree, $\Delta(G)$ $\dots$ For sufficiently large values of $\delta$ and $\Delta$ with $\delta\leq \Delta$, it would be interesting to determine a function $f(\delta,\Delta)$ such that for every
graph $G$ with minimum degree $\delta$ and maximum degree $\Delta$, we have $\DC(G) \le f(\delta, \Delta)$ and this bound is best possible.''

\section*{Acknowledgements}
The authors are thankful to the referees for helpful comments and suggestions.

\bibliographystyle{abbrvnat}
% \bibliographystyle{alpha}
% use the following instead if you encounter problems 
%\bibliographystyle{alpha}
\bibliography{coalition-dmtcs}

\begin{thebibliography}{28}
\providecommand{\natexlab}[1]{#1}
\providecommand{\url}[1]{\texttt{#1}}
\expandafter\ifx\csname urlstyle\endcsname\relax
  \providecommand{\doi}[1]{doi: #1}\else
  \providecommand{\doi}{doi: \begingroup \urlstyle{rm}\Url}\fi

\bibitem[Alikhani et~al.(2024{\natexlab{a}})Alikhani, Bakhshesh, and
  Golmohammadi]{abg}
S.~Alikhani, D.~Bakhshesh, and H.~Golmohammadi.
\newblock Total coalitions in graphs.
\newblock \emph{Quaest. Math.}, 47\penalty0 (11):\penalty0 2283--2294,
  2024{\natexlab{a}}.

\bibitem[Alikhani et~al.(2024{\natexlab{b}})Alikhani, Bakhshesh, Golmohammadi,
  and Konstantinova]{abgk}
S.~Alikhani, D.~Bakhshesh, H.~Golmohammadi, and E.~V. Konstantinova.
\newblock Connected coalitions in graphs.
\newblock \emph{Discuss. Math. Graph Theory}, 44\penalty0 (4):\penalty0
  1551--1566, 2024{\natexlab{b}}.

\bibitem[Alikhani et~al.(2024{\natexlab{c}})Alikhani, Golmohammadi, and
  Konstantinova]{agk}
S.~Alikhani, H.~Golmohammadi, and E.~V. Konstantinova.
\newblock Coalition of cubic graphs of order at most 10.
\newblock \emph{Commun. Comb. Optim.}, 9\penalty0 (3):\penalty0 437--450,
  2024{\natexlab{c}}.

\bibitem[Alikhani et~al.(2025)Alikhani, Bakhshesh, Golmohammadi, and
  Klav\v{z}ar]{abgk-2025}
S.~Alikhani, D.~Bakhshesh, H.~Golmohammadi, and S.~Klav\v{z}ar.
\newblock On independent coalition in graphs and independent coalition graphs.
\newblock \emph{Discuss. Math. Graph Theory}, 45\penalty0 (2):\penalty0
  533--544, 2025.

\bibitem[Bakhshesh et~al.(2023)Bakhshesh, Henning, and Pradhan]{bhp}
D.~Bakhshesh, M.~A. Henning, and D.~Pradhan.
\newblock On the coalition number of trees.
\newblock \emph{Bull. Malays. Math. Sci. Soc.}, 46\penalty0 (3):\penalty0 Paper
  No. 95, 18, 2023.

\bibitem[Bar\'at and Bl\'azsik(2024)]{babl}
J.~Bar\'at and Z.~L. Bl\'azsik.
\newblock General sharp upper bounds on the total coalition number.
\newblock \emph{Discuss. Math. Graph Theory}, 44\penalty0 (4):\penalty0
  1567--1584, 2024.

\bibitem[Bermudo et~al.(2019)Bermudo, Hern\'andez-G\'omez, and Sigarreta]{bhs}
S.~Bermudo, J.~C. Hern\'andez-G\'omez, and J.~M. Sigarreta.
\newblock Total {$k$}-domination in strong product graphs.
\newblock \emph{Discrete Appl. Math.}, 263:\penalty0 51--58, 2019.

\bibitem[Bonomo et~al.(2018)Bonomo, Bre\v{s}ar, Grippo, Milani\v{c}, and
  Safe]{bbgms}
F.~Bonomo, B.~Bre\v{s}ar, L.~N. Grippo, M.~Milani\v{c}, and M.~D. Safe.
\newblock Domination parameters with number 2: interrelations and algorithmic
  consequences.
\newblock \emph{Discrete Appl. Math.}, 235:\penalty0 23--50, 2018.

\bibitem[Golmohammadi et~al.(2024)Golmohammadi, Alikhani, and Bakhshesh]{gab}
H.~Golmohammadi, S.~Alikhani, and D.~Bakhshesh.
\newblock Double total coalitions in graphs, Dec. 2024.
\newblock URL \url{https://www.researchgate.net/publication/387352932}.

\bibitem[Harary and Haynes(2000)]{hh}
F.~Harary and T.~W. Haynes.
\newblock Double domination in graphs.
\newblock \emph{Ars Combin.}, 55:\penalty0 201--213, 2000.

\bibitem[Haynes et~al.(2020)Haynes, Hedetniemi, Hedetniemi, McRae, and
  Mohan]{hhhmm}
T.~W. Haynes, J.~T. Hedetniemi, S.~T. Hedetniemi, A.~A. McRae, and R.~Mohan.
\newblock Introduction to coalitions in graphs.
\newblock \emph{AKCE Int. J. Graphs Comb.}, 17\penalty0 (2):\penalty0 653--659,
  2020.

\bibitem[Haynes et~al.(2021)Haynes, Hedetniemi, Hedetniemi, McRae, and
  Mohan]{hhhmm1}
T.~W. Haynes, J.~T. Hedetniemi, S.~T. Hedetniemi, A.~A. McRae, and R.~Mohan.
\newblock Upper bounds on the coalition number.
\newblock \emph{Australas. J. Combin.}, 80:\penalty0 442--453, 2021.

\bibitem[Haynes et~al.(2023{\natexlab{a}})Haynes, Hedetniemi, Hedetniemi,
  McRae, and Mohan]{hhhmm2}
T.~W. Haynes, J.~T. Hedetniemi, S.~T. Hedetniemi, A.~A. McRae, and R.~Mohan.
\newblock Self-coalition graphs.
\newblock \emph{Opuscula Math.}, 43\penalty0 (2):\penalty0 173--183,
  2023{\natexlab{a}}.

\bibitem[Haynes et~al.(2023{\natexlab{b}})Haynes, Hedetniemi, Hedetniemi,
  McRae, and Mohan]{hhhmm3}
T.~W. Haynes, J.~T. Hedetniemi, S.~T. Hedetniemi, A.~A. McRae, and R.~Mohan.
\newblock Coalition graphs of paths, cycles, and trees.
\newblock \emph{Discuss. Math. Graph Theory}, 43\penalty0 (4):\penalty0
  931--946, 2023{\natexlab{b}}.

\bibitem[Haynes et~al.(2023{\natexlab{c}})Haynes, Hedetniemi, Hedetniemi,
  McRae, and Mohan]{hhhmm4}
T.~W. Haynes, J.~T. Hedetniemi, S.~T. Hedetniemi, A.~A. McRae, and R.~Mohan.
\newblock Coalition graphs.
\newblock \emph{Commun. Comb. Optim.}, 8\penalty0 (2):\penalty0 423--430,
  2023{\natexlab{c}}.

\bibitem[Henning and Jogan(2024)]{hj}
M.~A. Henning and S.~N. Jogan.
\newblock A characterization of graphs with given total coalition numbers.
\newblock \emph{Discrete Appl. Math.}, 358:\penalty0 395--403, 2024.

\bibitem[Henning and Kazemi(2010)]{hk}
M.~A. Henning and A.~P. Kazemi.
\newblock {$k$}-tuple total domination in graphs.
\newblock \emph{Discrete Appl. Math.}, 158\penalty0 (9):\penalty0 1006--1011,
  2010.

\bibitem[Henning and Mojdeh(2025)]{HM}
M.~A. Henning and D.~A. Mojdeh.
\newblock Double coalitions in graphs.
\newblock \emph{Bull. Malays. Math. Sci. Soc.}, 48\penalty0 (2):\penalty0 Paper
  No. 51, 19, 2025.

\bibitem[Henning and Yeo(2010)]{hy0}
M.~A. Henning and A.~Yeo.
\newblock Strong transversals in hypergraphs and double total domination in
  graphs.
\newblock \emph{SIAM J. Discrete Math.}, 24\penalty0 (4):\penalty0 1336--1355,
  2010.

\bibitem[Henning and Yeo(2013)]{HYbook}
M.~A. Henning and A.~Yeo.
\newblock \emph{Total domination in graphs}.
\newblock Springer Monographs in Mathematics. Springer, New York, 2013.

\bibitem[Henning and Yeo(2025)]{hy}
M.~A. Henning and A.~Yeo.
\newblock Transversal coalitions in hypergraphs.
\newblock \emph{Discrete Math.}, 348\penalty0 (2):\penalty0 Paper No. 114267,
  11, 2025.

\bibitem[Jafari et~al.(2025)Jafari, Alikhani, and Bakhshesh]{JAB}
A.~Jafari, S.~Alikhani, and D.~Bakhshesh.
\newblock {$k$}-coalitions in graphs.
\newblock \emph{Australas. J. Combin.}, 92:\penalty0 194--209, 2025.

\bibitem[Kazemi(2012)]{kaz}
A.~P. Kazemi.
\newblock On the total {$k$}-domination number of graphs.
\newblock \emph{Discuss. Math. Graph Theory}, 32\penalty0 (3):\penalty0
  419--426, 2012.

\bibitem[Kulli(1991)]{Kulli}
V.~R. Kulli.
\newblock On {$n$}-total domination number in graphs.
\newblock In \emph{Graph theory, combinatorics, algorithms, and applications
  ({S}an {F}rancisco, {CA}, 1989)}, pages 319--324. SIAM, Philadelphia, PA,
  1991.

\bibitem[Samadzadeh and Mojdeh(2024)]{sm}
M.~R. Samadzadeh and D.~A. Mojdeh.
\newblock Independent coalition in graphs: existence and characterization.
\newblock \emph{Ars Math. Contemp.}, 24\penalty0 (3):\penalty0 Paper No. 9, 17,
  2024.

\bibitem[Samadzadeh et~al.(2025)Samadzadeh, Mojdeh, and Nadimi]{smn}
M.~R. Samadzadeh, D.~A. Mojdeh, and R.~Nadimi.
\newblock Paired coalition in graphs.
\newblock \emph{AKCE Int. J. Graphs Comb.}, 22\penalty0 (1):\penalty0 43--54,
  2025.

\bibitem[Sheikholeslami and Volkmann(2014)]{SL}
S.~M. Sheikholeslami and L.~Volkmann.
\newblock {$k$}-tuple total domatic number of a graph.
\newblock \emph{Util. Math.}, 95:\penalty0 189--197, 2014.

\bibitem[West(1996)]{West}
D.~B. West.
\newblock \emph{Introduction to graph theory}.
\newblock Prentice Hall, Inc., Upper Saddle River, NJ, 1996.

\end{thebibliography}
\label{sec:biblio}

\end{document}